\newcommand{\Z}{\mathbb Z}
\newcommand{\F}{\mathbb F}
\newcommand{\I}{\mathcal I}
\newcommand{\calp}{\mathcal P}
\newcommand{\fk}{\mathcal{F}_k}
\newcommand{\W}{\mathbb W}
\newcommand{\m}{\mathfrak m}
\DeclareMathOperator{\Ann}{\ensuremath{Ann}}
\DeclareMathOperator{\frob}{\ensuremath{Frob}}
\newtheorem{theorem}[equation]{Theorem}
\newtheorem{prop}[equation]{Proposition}
\newtheorem{lemma}[equation]{Lemma}
\newtheorem{conj}[equation]{Conjecture}
\theoremstyle{definition}
\newtheorem{remark}[equation]{Remark}
\numberwithin{equation}{section}
\begin{document}
\title[A Carlitz--von Staudt theorem for finite rings]{A Carlitz--von
Staudt type theorem\\ for finite rings}

\author{Apoorva Khare}
\address[A.~Khare]{Indian Institute of Science;
Analysis and Probability Research Group; 
Bangalore, India}
\email{\tt khare@iisc.ac.in}

\author{Akaki Tikaradze}
\address[A.~Tikaradze]{Department of Mathematics, University of Toledo,
Toledo, USA}
\email{\tt tikar06@gmail.com}

\thanks{A.K.~is partially supported by a Ramanujan Fellowship (SERB) and
a Young Investigator Award from the Infosys Foundation. A.T.~was
partially supported by the University of Toledo Summer Research Awards
and Fellowships Program.}

\subjclass[2010]{Primary: 16P10; Secondary: 16K20}

\keywords{Staudt--Clausen, power sums, finite ring, matrix ring,
translation-invariant polynomial}

\date{\today}

\begin{abstract}
We compute the $k$th power-sums (for all $k>0$) over an arbitrary finite
unital ring $R$. This unifies and extends the work of Brawley, Carlitz,
and Levine for matrix rings [\textit{Duke Math.~J.}~1974], with folklore
results for finite fields and finite cyclic groups, and more general
recent results of Grau and Oller-Marc\'en for commutative rings
[\textit{Finite Fields Appl.}~2017].
As an application, we resolve a conjecture by Fortuny Ayuso, Grau,
Oller-Marc\'en, and R\'ua on zeta values for matrix rings over finite
commutative rings [\textit{Internat. J. Algebra Comput.}~2017].
We further recast our main result via zeta values over polynomial rings,
and end by classifying the translation-invariant polynomials over a large
class of finite commutative rings.
\end{abstract}

\maketitle

\section{Introduction and power-sum formulas}

Given a finite unital ring $R$, in this work we are interested in
computing the power-sums
\[
\zeta_R(-k) := \sum_{r \in R} r^k, \qquad k > 0.
\]

\noindent When $R = \Z / n \Z$, this involves computing the sum of the
first $n$ $k$th powers modulo $n$. Interest in this problem goes back at
least as far as the work of Clausen \cite{Cl} and von Staudt \cite{St};
clearly, the problem is also connected with Bernoulli numbers and
Bernoulli polynomials.

An important contribution to the area comes from the work of Carlitz on
zeta functions of $\F_q[T]$ and (rank one) Drinfeld modules. In
particular, Carlitz and his coauthors studied the above problem for more
general rings. We refer the reader to \cite{BCL,Ca2,Ca3} for more on
these themes.
For completeness we also remind that zeta values over finite fields,
\begin{equation}\label{Efield}
\zeta_{\F_q}(-k) = \begin{cases}
-1, \qquad & \text{if } (q-1)|k,\\
0,  \qquad & \text{otherwise},
\end{cases}
\end{equation}

\noindent have modern applications to error-correcting codes such as the
(dual) Reed--Solomon code, and more generally the BCH
(Bose--RayChaudhuri--Hocquenghem) code, via the discrete Fourier
transform.

The problem of computing zeta values for other finite (commutative) rings
has recently attracted renewed attention \cite{FGO,FGOR,GO,GOS}; see
also~\cite{Mor}. In particular, for commutative rings the question was
solved very recently in \cite{GO} (modulo a small typo in
their main theorem; see Remark \ref{Rtypo}).

In this paper, we are interested in the more challenging problem of
computing zeta values over non-commutative finite rings.
This problem was once again studied by Carlitz and his coauthors.
For instance, in \cite{BCL} the authors show:

\begin{theorem}[Brawley, Carlitz, and Levine, {\cite[Section
3]{BCL}}]\label{Tbcl}
If $R = M_{n \times n}(\F_q)$ for $n \geqslant 2$, and $k > 0$, then
\[
\zeta_R(-k) = \begin{cases}
{\rm Id}_{2 \times 2}, \qquad & \text{if } n = q = 2 \text{ and } 1 < k
\equiv -1,0,1 \hspace*{-3mm} \mod 6,\\
0, & \text{otherwise}.
\end{cases}
\]
\end{theorem}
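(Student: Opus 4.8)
The plan is to extract two symmetries of the sum $S_k := \sum_{r \in R} r^k$, where $R = M_n(\F_q)$, that pin down its gross structure. First, for each $\lambda \in \F_q^\times$ the map $r \mapsto \lambda r$ is a bijection of $R$, and since $\lambda$ is central we get $S_k = \sum_r (\lambda r)^k = \lambda^k S_k$, so $(\lambda^k - 1) S_k = 0$; taking $\lambda$ to generate the cyclic group $\F_q^\times$ shows that $S_k = 0$ whenever $(q-1) \nmid k$. Second, for $g \in GL_n(\F_q)$ the map $r \mapsto g r g^{-1}$ is a bijection of $R$, whence $g S_k g^{-1} = S_k$; since $GL_n(\F_q)$ spans $M_n(\F_q)$ as an $\F_q$-vector space, $S_k$ is central and hence scalar, $S_k = c_k\,\mathrm{Id}$ with $c_k \in \F_q$. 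These two reductions leave only the computation of the single scalar $c_k = (S_k)_{11}$, under the standing assumption $(q-1) \mid k$.

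Next I would compute $c_k = \sum_{r \in R} (r^k)_{11}$ by expanding $(r^k)_{11} = \sum_w r_{w_0 w_1} \cdots r_{w_{k-1} w_k}$ as a sum over closed walks $w$ of length $k$ based at the vertex $1$ on the complete directed graph with loops on $\{1, \dots, n\}$, and interchanging the two summations. As the matrix entries are independent and uniform over $\F_q$, the inner sum factors as $\prod_{(a,b)} \bigl(\sum_{x \in \F_q} x^{m_{ab}(w)}\bigr)$, where $m_{ab}(w)$ counts the traversals of the directed edge $(a,b)$ by $w$. Since $\sum_{x \in \F_q} x^m = 0$ for $m = 0$, and for $m \geqslant 1$ equals $-1$ or $0$ according as $(q-1) \mid m$ or not, only those walks survive that traverse every one of the $n^2$ directed edges, each a positive multiple of $(q-1)$ times, and each such walk contributes $(-1)^{n^2}$. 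Thus $c_k = (-1)^{n^2} N_{n,q,k} \bmod p$, where $N_{n,q,k}$ counts these covering closed walks. In particular every edge is used at least $q-1$ times, so $c_k = 0$ (hence $S_k = 0$) whenever $k < (q-1)n^2$; this already disposes of $k = 0, 1$.

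The remaining, and hardest, step is the evaluation of $N_{n,q,k} \bmod p$ for $(q-1) \mid k \geqslant (q-1)n^2$, which I expect to split into a uniform vanishing statement and one genuine exception. For $q \geqslant 3$ (any $n \geqslant 2$) and for $q = 2$, $n \geqslant 3$, the claim is $N_{n,q,k} \equiv 0 \bmod p$, which I would attempt via a fixed-point-free pairing of covering walks exploiting the available ``slack'' (either $n^2 \geqslant 9$ edges, or multiplicities $\geqslant q-1 \geqslant 2$) to interchange two traversals canonically, forcing divisibility by $p$. The sole surviving case is $n = q = 2$, where $q - 1 = 1$ makes the scaling step vacuous. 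Here I would instead compute $S_k$ directly for $k \geqslant 2$ by partitioning $M_2(\F_2)$ into the zero matrix together with the three nonzero nilpotents (which contribute $0$, as $r^2 = 0$), the six rank-one idempotents (whose sum is $\mathrm{Id}$, independently of $k$), and the six invertible matrices, forming $GL_2(\F_2) \cong S_3$. Since $S_3$ has exponent $6$, the invertible contribution is periodic of period $6$, and a short computation shows it equals $\mathrm{Id}$ exactly for $k \equiv 2, 3, 4 \bmod 6$ and $0$ otherwise; adding the constant idempotent contribution $\mathrm{Id}$ yields $S_k = \mathrm{Id}$ precisely when $k \equiv -1, 0, 1 \bmod 6$ and $S_k = 0$ otherwise, matching the stated formula.

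The main obstacle throughout is the modular evaluation of $N_{n,q,k}$: I expect the cleanest part to be the two symmetry reductions and the walk expansion, whereas producing a single, uniform reason for the vanishing of $N_{n,q,k}$ away from $(n,q) = (2,2)$ — rather than a case-by-case count of covering walks — will require the genuine combinatorial input. A supporting observation that may help control the value is that the entrywise Frobenius $r \mapsto (r_{ab}^{\,p})$ is a ring automorphism of $R$ fixing the power-sum, forcing $c_k^{\,p} = c_k$, i.e.\ $c_k \in \F_p$; this reduces the determination of the scalar to the prime-field case and may streamline the pairing argument.
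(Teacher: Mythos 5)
A point of reference first: the paper does not prove this statement --- it is imported verbatim from Brawley--Carlitz--Levine \cite{BCL}, whose own argument (Section 3 of that paper) proceeds through the similarity-class/canonical-form structure of $M_{n \times n}(\F_q)$, not through anything resembling your expansion. So you are to be judged against a complete proof in the literature rather than against this paper. Your reductions are correct and efficient: the dilation $r \mapsto \lambda r$ kills $S_k := \sum_r r^k$ unless $(q-1) \mid k$; conjugation-invariance forces $S_k$ to be scalar, since the centralizer of $GL_n(\F_q)$ in $M_{n \times n}(\F_q)$ is the scalars for $n \geqslant 2$; and the entrywise expansion correctly yields $c_k = (-1)^{n^2} N_{n,q,k} \bmod p$, where $N_{n,q,k}$ counts closed walks at the vertex $1$ traversing each of the $n^2$ directed edges a positive multiple of $q-1$ times (using $\sum_{x \in \F_q} x^m = -1$ or $0$, exactly as quoted in the introduction of this paper). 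The terminal case $n=q=2$ is also handled correctly: the partition into $0$, the three nonzero nilpotents, the six rank-one idempotents (which sum to $\mathrm{Id}$), and $GL_2(\F_2) \cong S_3$ does reproduce $S_k = \mathrm{Id}$ precisely for $1 < k \equiv 0, \pm 1 \bmod 6$.

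The genuine gap is the step you yourself flag as the hardest. The assertion $N_{n,q,k} \equiv 0 \bmod p$ for all $(n,q) \neq (2,2)$ with $(q-1) \mid k$ is, after your (valid) reductions, exactly equivalent to the theorem in those cases --- which is to say, it is the entire content of the result for $q \geqslant 3$ and for $q=2,\ n \geqslant 3$ --- and you offer only the intention to find ``a fixed-point-free pairing of covering walks exploiting the available slack.'' No involution is constructed. The natural candidates (e.g.\ exchanging or reversing the segments between two traversals of a canonically chosen edge) are not obviously well defined as involutions and have fixed points --- walks where the exchanged segments coincide or the excursion between the two chosen traversals is empty or symmetric --- whose contribution modulo $p$ would itself have to be analyzed, likely by a further induction. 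Until that combinatorial input is supplied, the vanishing statement, i.e.\ the bulk of the theorem, is unproved. (A small additional caveat: your closing remark that $c_k \in \F_p$ ``reduces the determination of the scalar to the prime-field case'' overstates what the Frobenius gives you; it constrains the value of $c_k$ to lie in $\F_p$ but does not let you replace $M_{n \times n}(\F_q)$ by $M_{n \times n}(\F_p)$, since the multiplicity condition $(q-1) \mid m_{ab}$ in your walk count still depends on $q$.)
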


Since the aforementioned work, to our knowledge the only other
non-commutative rings for which power-sums/zeta-values have been computed
to date, are matrix rings over $\Z/n\Z$ for various $n$.

In the present work, we compute all power sums over an arbitrary finite
ring $R$. To do so, notice that if $R$ has characteristic $\prod_{i=1}^l
p_i^{t_i}$ for pairwise distinct primes $p_i$, with $t_i>0\ \forall i$,
then $R = R_1 \times \cdots \times R_l$, where each ring $R_i$ has
characteristic $p_i^{t_i}$. Equipped with this notation, it is possible
to state the main result of the paper.

\begin{theorem}\label{Tmain}
Suppose as above that $R = \times_{i=1}^l R_i$ is an arbitrary finite
unital ring. Then:
\begin{equation}\label{Eproduct}
\zeta_R(-k) = \sum_{i=1}^l \frac{|R|}{|R_i|} \zeta_{R_i}(-k),
\end{equation}

\noindent where $k \in \mathbb{N}$, and the $i$th summand lives in $R_i$.

Now consider the problem for each factor $R = R_i$, with characteristic
equal to a power of a prime $p>0$. Let $T_{n \times n}(R')$ denote the
upper triangular $n \times n$ matrices over a ring $R'$, and let $x :=
\begin{pmatrix} 0 & 1\\ 0 & 0 \end{pmatrix}$. Then,
\begin{equation}
\zeta_R(-k) = \begin{cases}
\zeta_{\F_q}(-k), & \text{if } R = \F_q;\\
p^{m-1} \zeta_{\F_p}(-k), & \text{if } R = \Z / p^m \Z \text{ for } m
\geqslant 2, \text{ and }\\
& \qquad \qquad p>2 \text{ or } k=1 \text{ or } p = 2|k;\\
x , \ & \text{if } R = \F_2[x] / (x^2) \text{ or } T_{2 \times 2}(\F_2),
\text{ and } k > 1 \text{ odd};\\
{\rm Id}_{2 \times 2}, & \text{if } R = M_{2 \times 2}(\F_2) \text{ and }
1 < k \equiv 0, \pm 1 \mod 6;\\
0, & \text{otherwise}.
\end{cases}
\end{equation}

\noindent Here, $\zeta_{\F_q}(-k)$ is as in \eqref{Efield}, and $\F_2[x]
/ (x^2)$ is identified with the span of unipotent matrices in $T_{2
\times 2}(\F_q)$, with $1 \mapsto {\rm Id}_{2 \times 2}$.
\end{theorem}

In particular, Theorem \ref{Tmain} extends the aforementioned results,
and brings closure to the problem of computing zeta values over arbitrary
finite rings.\smallskip

We make a few observations about Theorem \ref{Tmain} before proceeding
further. First, when $R = \Z / p^m \Z$ in Theorem \ref{Tmain}, by
$\zeta_{\F_p}(-k)$ we mean any lift of this element to $\Z / p^m \Z$;
multiplying by $p^{m-1}$ gives independence from this choice of lift.
Moreover, the case of upper triangular $2 \times 2$ matrices can be
replaced by the appropriate assertion for lower triangular $2 \times 2$
matrices, or for any isomorphic ring $R$.

Second, note that the proof of Theorem \ref{Tmain} is more involved than
the corresponding result for $R$ commutative (which was shown in
\cite{GO}). It is interesting that the proof in the present work does not
make use of the findings of \cite{GO}, which instead fall out of our
proof as a consequence.

\begin{remark}\label{Rtypo}
We note a small typo in the main result of \cite{GO}. Specifically,
Theorem 1(iii) in it is not true as stated; for instance, if $R = R_1 =
\Z / 4\Z$ and $k>1$ is odd, then \cite[Theorem 1(iii)]{GO} says the zeta
value $\zeta_R(-k) = -(|R|/2) = 2 \mod 4$. However,
\[
\zeta_R(-k) \equiv 0^k + 1^k + 2^k + (-1)^k \equiv 0 \mod 4, \qquad
\text{for all } k > 1 \text{ odd}.
\]
The correction is that \cite[Theorem 1(iii)]{GO} holds for $k=1$, not for
$k>1$ odd. Indeed, the proof of \textit{loc.~cit.} uses \cite[Proposition
1(ii)]{GO}, which gives nonzero zeta values for $k=1$ but not for odd
$k>1$.
\end{remark}

\begin{remark}
We also make a small computational correction to \cite[Theorem 5.4]{BCL}
by Carlitz et al. The authors claimed that every power sum over $T_{n
\times n}(\F_q)$ vanishes for all $n,q \geqslant 2$. In particular, they
remarked that for $n=q=2$, ``a direct computation verifies the result''
(see the last line of their proof argument in \textit{loc.~cit.}). As we
state in our main theorem \ref{Tmain}, and verify in Proposition
\ref{Pcalc2} below, this computation actually shows that all odd-power
sums over $T_{2 \times 2}(\F_2)$ are nonzero, except for the sum of
elements of $T_{2 \times 2}(\F_2)$.
For all other (triangular) matrix rings over finite fields, $R = M_{n
\times n}(\F_q)$ or $T_{n \times n}(\F_q)$, the power sums were computed
by Carlitz et al in \cite[Section 3 and Theorem 5.4]{BCL}.
Akin to the contents of \cite{GO}, we do not make use of these results,
relying only on Theorem \ref{Tbcl}. The results in \cite{BCL} on zeta
values over triangular matrix rings also emerge out of our proof as
consequences.
\end{remark}

We conclude this section with an application of our main result, which
settles a conjecture in the recent paper \cite{FGOR}.

\begin{conj}[Fortuny Ayuso, Grau, Oller-Marc\'en, and R\'ua, 2015]
Let $d>1$ and $R$ be a finite (commutative) ring. Then all power sums
$\sum_{A \in M_{d \times d}(R)} A^k$ vanish unless the following
conditions hold:
\begin{enumerate}
\item $d=2$,
\item $|R| \equiv 2 \mod 4$ and $1 < k \equiv 0, \pm 1 \mod 6$,
\item The unique element $e \in R \setminus \{ 0 \}$ such that $2e=0$
satisfies: $e^2=e$.
\end{enumerate}
Moreover, in this case the aforementioned power sum equals $e \cdot {\rm
Id}_{2 \times 2}$.
\end{conj}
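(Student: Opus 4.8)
The plan is to recognize the power sum in the conjecture as the evaluation $\calp_k^{M_{d \times d}(R)}(0) = \sum_{A \in M_{d\times d}(R)} A^k$ of a power-sum polynomial at $T = 0$, and then to feed the ring $S := M_{d \times d}(R)$ into Theorem \ref{Tmain}. First I would decompose $R$ according to its characteristic: writing $R = \prod_{i=1}^l R_i$ with $R_i$ of characteristic $p_i^{t_i}$, the matrix ring factors compatibly as $S = \prod_{i=1}^l S_i$ with $S_i = M_{d \times d}(R_i)$. Applying the product formula \eqref{Eproduct} and setting $T = 0$, I obtain
\[
\sum_{A \in M_{d\times d}(R)} A^k = \left( \tfrac{|S|}{|S_1|}\calp_k^{S_1}(0), \ \ldots, \ \tfrac{|S|}{|S_l|}\calp_k^{S_l}(0) \right) \in \prod_{i=1}^l S_i,
\]
so it suffices to determine each $\calp_k^{S_i}(0)$ for the prime-power-characteristic factor $S_i = M_{d\times d}(R_i)$.

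Next I would invoke the case analysis of Theorem \ref{Tmain}, whose ``otherwise'' clause forces $\calp_k^{S_i} \equiv 0$ unless $S_i$ is isomorphic to one of the explicitly listed rings. Since $d > 1$ and $R_i \neq 0$, the ring $S_i = M_{d\times d}(R_i)$ is non-commutative, which rules out the commutative cases $\F_q$, $\Z/p^m\Z$, and $\F_2[x]/(x^2)$. A cardinality count ($|S_i| = |R_i|^{d^2} \geqslant 16$ for $d \geqslant 2$) then eliminates the order-$8$ ring $T_{2\times 2}(\F_2)$ and shows that $S_i \cong M_{2\times 2}(\F_2)$ can occur only when $|R_i|^{d^2} = 16$, forcing $d = 2$ and $R_i \cong \F_2$. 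Thus only the unique prime-$2$ factor can contribute, and only when $d = 2$ and that factor equals $\F_2$. Reading off the $M_{2\times 2}(\F_2)$ case of Theorem \ref{Tmain} and setting $T = 0$ keeps exactly the $j = k$ term, recovering the Brawley--Carlitz--Levine value (Theorem \ref{Tbcl}): $\calp_k^{M_{2\times 2}(\F_2)}(0) = \mathrm{Id}_{2\times 2}$ when $1 < k \equiv 0, \pm 1 \bmod 6$, and $0$ otherwise. This already yields conditions (1) and (2).

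It remains to match the structural data, which I would do as follows. The condition $|R| \equiv 2 \bmod 4$ says the $2$-part of $|R| = \prod_i |R_i|$ is exactly $2$; since only the prime-$2$ factor $R_{i_0}$ contributes powers of $2$, this is equivalent to $|R_{i_0}| = 2$, i.e.\ $R_{i_0} \cong \F_2$ --- precisely the factor isolated above. The element $e$ of the conjecture, the unique nonzero solution of $2e = 0$, is then $(0,\ldots,1,\ldots,0)$ supported on the $\F_2$ slot (for odd $p_i$ the scalar $2$ is a unit, forcing that coordinate to vanish), so $e$ is automatically the primitive idempotent of that factor and $e^2 = e$ holds for free; this shows condition (3) is not an extra hypothesis but a consequence of (2). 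Finally, in the surviving coordinate the multiplicity $|S|/|S_{i_0}| = |R|^4/16$ is odd whenever $|R| \equiv 2 \bmod 4$, hence reduces to $1$ in $\F_2$, so the power sum is $\mathrm{Id}_{2\times 2}$ in the $\F_2$ slot and $0$ elsewhere --- which is exactly $e \cdot \mathrm{Id}_{2\times 2}$ in $M_{2\times 2}(R) = \prod_i M_{2\times 2}(R_i)$.

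The main obstacle, or at least the step requiring the most care, is the bookkeeping at the end: tracking the multiplicity $|S|/|S_{i_0}|$ modulo the characteristic of the surviving factor and checking it reduces to $1$, together with verifying that $e \cdot \mathrm{Id}_{2\times 2}$ really coincides coordinatewise with the tuple produced by the product formula. Everything else reduces to the classification already furnished by Theorem \ref{Tmain}, plus the elementary cardinality and commutativity arguments isolating $M_{2\times 2}(\F_2)$ as the sole matrix ring contributing a nonzero power sum.
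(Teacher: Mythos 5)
Your proposal is correct and follows essentially the same route as the paper: decompose $R$ into factors of prime-power characteristic, transfer this to $M_{d\times d}(R)$ via the product formula \eqref{Eproduct}, and use the classification in Theorem \ref{Tmain} together with non-commutativity and a cardinality count to isolate $M_{2\times 2}(\F_2)$ as the only factor that can contribute. You simply spell out the ``dimension considerations'' and the final bookkeeping (the odd multiplicity $|S|/|S_{i_0}|$ and the identification of $e$ with the idempotent of the $\F_2$ slot) that the paper leaves implicit.
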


Recall that the authors stated two conjectures in \cite{FGOR}, first for
the commutative case and then for the non-commutative case. We prove the
latter, stronger version, for unital rings.

\begin{proof}
Write the finite unital ring $R = \times_{i=1}^l R_i$ as a product of
factor rings of distinct prime power characteristics $p_i^{t_i}$. Then,
\[
M_{d \times d}(R) = \times_{i=1}^l M_{d \times d}(R_i)
\]

\noindent is a product of non-commutative rings. By Theorem \ref{Tmain},
it follows that some $p_i = 2$ if there is a nonzero power sum. Let $p_1
= 2$ without loss of generality; then proving the conjecture for $R_1$
shows it for $R$, via Equation \eqref{Eproduct}. Now note that $R_1$ and
$M_{d \times d}(R_1)$ have the same characteristic. Applying Theorem
\ref{Tmain} and dimension considerations, $R_1 = \F_2$, and the
conditions in the conjecture are easily seen to hold.
\end{proof}

\subsection*{Organization of the paper}

The next section is devoted to proving Theorem \ref{Tmain}. In the third
and final section, we recast Theorem \ref{Tmain} in terms of zeta values
of linear polynomials. This yields a family of translation-invariant
polynomials over each finite ring. We then classify the
translation-invariant polynomials over all commutative rings featuring in
Theorem \ref{Tmain}, and in fact over any (possibly infinite) ring in a
larger class.

\section{Proof of the main result}

In proving Theorem \ref{Tmain}, the first reduction \eqref{Eproduct} to
rings of prime power characteristic is obtained via the following
straightforward computation, which is also used below. In it, we consider
the more general case of a direct product of rings $R = \times_{i=1}^l
R_i$. Writing an element $r \in R$ uniquely as $\pi_1(r) + \cdots +
\pi_l(r)$, with $\pi_i(r) \in R_i$, we obtain:
\begin{equation}\label{Etimes}
\zeta_R(-k) = \sum_{r \in R} (\pi_1(r) + \cdots + \pi_l(r))^k =
\sum_{r \in R} \left( \sum_{i=1}^l \pi_i(r)^k \right) = \sum_{i=1}^l
\frac{|R|}{|R_i|} \zeta_{R_i}(-k).
\end{equation}

Now Theorem \ref{Tmain} is proved in a series of steps. The first few
steps isolate the rings for which there exist non-vanishing power sums,
and for ease of exposition we present these steps as individual
propositions.
We begin by addressing the case of finite rings with odd prime power
characteristic.

\begin{prop}\label{Podd}
Suppose $R$ is a finite ring of characteristic $p^m$ for some $m
\geqslant 1$, with $p$ odd as above. Also suppose $R$ is neither a cyclic
group (as an additive group) nor a field. Then all power sums over $R$
vanish.
\end{prop}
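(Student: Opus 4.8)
The plan is to prove the stronger assertion that the power sum $Z_k := \sum_{r\in R} r^k$ vanishes for every $k \geqslant 1$; this gives the proposition, since by \eqref{Esum} the only remaining term is $|R|\,T^k$, and $|R| = 0$ in $R$ because $(R,+)$ is a finite abelian group of exponent $p^m$, so $|R|$ is a power of $p$ divisible by the characteristic. I would first dispose of odd $k$: the bijection $r \mapsto -r$ satisfies $(-r)^k = -r^k$, giving $2Z_k = 0$, and since $p$ is odd $2$ is a unit, so $Z_k = 0$. For even $k$ I would use central scaling: for any central unit $z$, the bijection $r \mapsto zr$ gives $(z^k-1)Z_k = 0$. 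Taking $z$ to be the image of a primitive root modulo $p^m$, the element $z^k - 1$ is a unit whenever $(p-1) \nmid k$, so $Z_k = 0$ there as well. (Conjugating by units shows $Z_k \in Z(R)$, used below.) This reduces everything to the exponents with $(p-1) \mid k$.

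For such $k$ I would reduce to indecomposable $R$ by induction on $|R|$. If $R = R_1 \times R_2$ nontrivially, then \eqref{Etimes} makes the $R_i$-component of $Z_k$ equal to $|R_{3-i}|\,Z^{R_i}_k$. When $R_i$ is neither cyclic nor a field this vanishes by induction; when $R_i = \F_{q_i}$ the factor $|R_{3-i}|$ is a positive power of $p$ and hence $0$ in characteristic $p$; and when $R_i = \Z/p^{m_i}\Z$ the elementary congruence $\sum_{r=0}^{p^{m_i}-1} r^k \equiv 0 \bmod p^{m_i - 1}$ (obtained by grouping residues modulo $p^{m_i-1}$) makes $|R_{3-i}|\,Z^{R_i}_k \equiv 0 \bmod p^{m_i}$. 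Since central idempotents lift modulo the radical, an indecomposable $R$ has $R/J$ a single matrix algebra $M_n(\F_q)$, so only this case remains.

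If $n \geqslant 2$, lifting matrix units gives $R \cong M_n(\Lambda)$ for a local ring $\Lambda$; as $Z_k$ is central it is scalar, and a conjugation-and-counting argument of Brawley--Carlitz--Levine type (Theorem \ref{Tbcl}, which returns $0$ for odd $q$) forces $Z_k = 0$. The genuinely hard case is $R$ local with residue field $\F_q$, maximal ideal $\m \neq 0$ (as $R$ is not a field) and $R \neq \Z/p^m\Z$ (as $R$ is not cyclic), in the range $(p-1)\mid k$ where every available central unit $z$ has $z^k - 1 \in \m$, so scaling yields no annihilating unit. Here I would split $R = \bigsqcup_{a \in \F_q}(\omega(a)+\m)$ along Teichm\"uller representatives and expand, reducing $Z_k$ to the interior sums $\sum_{j\in\m} j^i$; scaling by $\F_q^\times$ kills these unless $(q-1)\mid i$, while $\sum_{j\in\m} j = 0$ since $p$ is odd, after which induction on $|\m|$ closes the commutative case (alternatively via the translation-invariance classification of Theorem \ref{Twitt}). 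This local case, where all multiplicative symmetries collapse and one must descend through the additive coset structure, is the main obstacle; the matrix case $n\geqslant 2$, which requires genuinely extending the Brawley--Carlitz--Levine computation over a local base rather than merely citing it, is a secondary difficulty.
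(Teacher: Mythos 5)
Your opening reductions are correct and even elegant: the involution $r\mapsto -r$ kills odd $k$; central scaling by the image of a primitive root mod $p^m$ kills all $k$ with $(p-1)\nmid k$; and the product/cyclic/field bookkeeping via \eqref{Etimes} is sound. But the argument does not close, and you say so yourself: the two cases you defer -- $R\cong M_n(\Lambda)$ with $\Lambda$ local and not a field, and $R$ local, non-cyclic, with $\m\neq 0$ -- are exactly where all of your multiplicative symmetries degenerate, and for these you offer only a programme. Theorem \ref{Tbcl} is proved only over $\F_q$, so ``a conjugation-and-counting argument of Brawley--Carlitz--Levine type'' over a local base is an unproved extension, not a citation. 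In the local case your Teichm\"uller expansion has two problems: a finite local ring of characteristic $p$ need not be commutative (e.g.\ $\F_{q}[x;\sigma]/(x^2)$ with $x a=\sigma(a)x$), so $(\omega(a)+j)^k$ does not expand binomially and the ``interior sums'' $\sum_{j\in\m}j^i$ are not what appear; and even in the commutative case the step ``induction on $|\m|$ closes the case'' is not an argument, since $\m$ is not a unital ring and the surviving exponents $(q-1)\mid i$, $i\geqslant 2$ are precisely the ones scaling cannot kill.

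The paper avoids both obstacles with a single observation (Lemma \ref{L1}): if $p>2$ and $R$ has a nonzero two-sided ideal $\I$ with $\I^2=0$, then writing $r=c+y$ with $y\in\I$ gives $\sum_{y\in\I}(c+y)^k=|\I|\,c^k+\sum_{j}c^j\bigl(\sum_{y\in\I}y\bigr)c^{k-1-j}$, and both terms vanish ($\I$ has odd order, so its elements sum to zero). Taking $\I=J(R)^{d-1}$ in characteristic $p$ disposes of every non-semisimple ring at once -- including your noncommutative local rings and your $M_n(\Lambda)$ -- leaving only the semisimple case, which is Wedderburn plus Theorem \ref{Tbcl} plus \eqref{Etimes}; for $m>1$ one takes $\I=p^{m-1}R$ and inducts on $m$ via Nakayama. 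Your skeleton could be repaired by replacing your two deferred cases with this ideal-theoretic collapse, but as written the proposal proves the proposition only outside its hardest instances.
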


The following technical lemma will be useful in the proof (and beyond).

\begin{lemma}\label{L1}
Suppose $R$ has prime characteristic $p>2$, and a nonzero two-sided ideal
$\I$ with $\I^2 = 0$. Then $\zeta_R(-k) = 0\ \forall k > 0$. The same is
true if $p=2$ and $\dim_{\F_2} (\I) \geqslant 2$.
\end{lemma}

In proving Proposition \ref{Podd}, at least when the characteristic is
$p$, the idea is to apply the lemma for $\I$ a suitable power of the
Jacobson radical of $R$.

\begin{proof}
Choose and fix coset representatives $\{ c \in \mathcal{C} \}$ of the
subspace $\I \subset R$. Writing every element $r \in R$ uniquely as $c +
y$, for $c \in \mathcal{C}$ and $y \in \I$, and rearranging sums, we have
\begin{align*}
\sum_{r \in R} r^k = \sum_{c \in \mathcal{C}} \sum_{y \in \I} (c+y)^k
= &\ \sum_{c \in \mathcal{C}} |\I| c^k + \sum_{c \in \mathcal{C}} \sum_{y
\in \I} \sum_{j=0}^{k-1} c^j y c^{k-1-j}\\
= &\ \sum_{c \in \mathcal{C}} \sum_{j=0}^{k-1} c^j \left( \sum_{y
\in \I} y \right) c^{k-1-j},
\end{align*}

\noindent and the innermost sum vanishes if $p$ is odd, or if $p=2$ and
$\dim_{\F_2} (\I) \geqslant 2$.
\end{proof}

\begin{proof}[Proof of Proposition \ref{Podd}]
The proof is by induction on $m$; we will consider the $m=1,2$ cases
separately from higher $m$. Also assume without loss of generality that
$R$ does not have characteristic $p^{m-1}$.

First suppose $m=1$. If $R$ has nonzero Jacobson radical $J(R)$, say of
nilpotence degree $d>1$, then we are done by Lemma \ref{L1} for $\I :=
J(R)^{d-1}$. Otherwise $J(R) = 0$, whence $R$ is isomorphic to a direct
product of matrix rings:
\[
R = \times_{j=1}^l M_{d_j \times d_j} ( D_j ),
\]

\noindent for finite division rings $D_j$ of characteristic $p$. By
Wedderburn's Little Theorem, each $D_j$ is a field $\F_{p^{m_j}}$, say.
Now use Equation \eqref{Etimes} to show every power sum is zero if there
is more than one factor. Finally, if $R = M_{d \times d}(\F_{p^m})$ with
$d>1$, then we are done by Theorem \ref{Tbcl}.

Now suppose $R$ has characteristic $p^m$, with $m>1$, and the result
holds for $m-1$.
Without loss of generality, we may assume $p^{m-1} R \neq 0$. Define $\I
:= p^{m-1} R$; then $\I^2 = 0$ since $m>1$. Since $p \I = 0$, note as in
Lemma \ref{L1} that $\sum_{y \in \I} y = 0$. Also choose a set
$\mathcal{C}$ of coset representatives of the subgroup $\I$ in $R$; then
as in the proof of Lemma \ref{L1},
\begin{equation}\label{Ecompute}
\sum_{r \in R} r^k = \sum_{c \in \mathcal{C}} \sum_{y \in \I} (c+y)^k
= |\I| \sum_{c \in \mathcal{C}} c^k + \sum_{c \in \mathcal{C}}
\sum_{j=0}^{k-1} c^j \left( \sum_{y \in \I} y \right) c^{k-1-j} =
|\I| \sum_{c \in \mathcal{C}} c^k.
\end{equation}

\noindent For future use, note the first two equalities also hold in
rings of characteristic $2^m$ for $m>1$.

There are now two cases. First if $m=2$, then $R / (p)$ is a module over
$\F_p$. By the above analysis in the $m=1$ case, $R / (p)$ can have
nonzero power sums only when $R / (p) = \F_q$. If $q=p$ then $R / (p)$ is
cyclic, whence so is $R$ by Nakayama's Lemma for $\Z / p^2\Z$, but this
is impossible by assumption. Hence $p^2 | q$. Now $\I = (p)$ is a module
over $R / (p) = \F_q$, so $p^2$ divides $|\I|$, and therefore the power
sum in \eqref{Ecompute} vanishes as desired.

Finally, if $m>2$, then it suffices to show, via (lifting using) the
quotient map $: R \twoheadrightarrow R / (p^{m-1})$, that the power sum
$\sum_{c \in R / (p^{m-1})} c^k$ vanishes. But since $R / (p^{m-1})$ has
characteristic $p^{m-1}$, by the induction hypothesis it can only have
nonzero power sums if it is cyclic. In this case, Nakayama's Lemma
implies $R$ is also cyclic, which is impossible.
\end{proof}

The next case to consider is if $R$ has characteristic $p=2$. In what
follows, define a ring to be \textit{indecomposable} if it is not the
direct product of two factor rings.

\begin{prop}\label{P2}
Suppose $R$ is a finite ring of characteristic $2$. Then $R$ has nonzero
power sums only if $R$ is one of $\F_{2^m}$, $M_{2 \times 2}(\F_2)$,
$\F_2[x] / (x^2)$, or the upper triangular matrices $T_{2 \times
2}(\F_2)$.
\end{prop}

\begin{proof}
At the outset we note that $R$ is indecomposable, for if $R = R_1 \times
R_2$, both of even order, then $\zeta_R(-k) = 0$ by \eqref{Etimes}.

First suppose $J(R) = 0$. Then $R = \times_{j=1}^l M_{d_j \times
d_j}(\F_{2^{m_j}})$, as in the proof of Proposition \ref{Podd}. Again by
Equation \eqref{Etimes}, every power sum is zero if there is more than
one factor. Now by Theorem \ref{Tbcl}, the only cases when $R$ has
nonzero power sums is if $R$ is a field or $M_{2 \times 2}(\F_2)$.

Next, suppose $J(R) \neq 0$. We propose an argument similar to Lemma
\ref{L1}. First suppose $\dim_{\F_2} J(R) \geqslant 2$. If $J(R)$ has
nilpotence degree $d \geqslant 2$, define $\I := J(R)^{d-1}$; then either
$\dim_{\F_2} \I \geqslant 2$, in which case we are done by Lemma
\ref{L1}, or $\I$ is one-dimensional and $\I \cdot J(R) = J(R) \cdot \I =
0$. We show in this latter case that all power sums vanish on $R$.

Let $\mathcal{C}$ (respectively $\mathcal{B}$) denote any fixed choice of
coset representatives of $J(R)$ in $R$ (respectively of $\I$ in $J(R)$).
Now compute as in \eqref{Ecompute}, using that $\I^2 = 0$ and $b \I = \I
b = 0$ for $b \in \mathcal{B}$:
\begin{align*}
\sum_{r \in R} r^k = &\ \sum_{c \in \mathcal{C}} \sum_{b \in \mathcal{B}}
\sum_{y \in \I} ((c+b)+y)^k\\
= &\ |\I| \sum_{c,b} (c+b)^k + \sum_{c,b} \sum_{j=0}^{k-1} (c+b)^j \left(
\sum_{y \in \I} y \right) (c+b)^{k-1-j}.
\end{align*}

\noindent Since $\I \neq 0$, $|\I| = 0$ in $R$. Moreover, since $(c+b)^j
y (c+b)^{k-1-j} = c^j y c^{k-1-j}$, we have
\[
\sum_{r \in R} r^k = 0 + \sum_{c \in \mathcal{C}} \sum_{j=0}^{k-1}
\sum_{b \in \mathcal{B}} c^j \left( \sum_{y \in \I} y \right) c^{k-1-j} =
\sum_{c \in \mathcal{C}} \sum_{j=0}^{k-1} \frac{|J(R)|}{|\I|} c^j \left(
\sum_{y \in \I} y \right) c^{k-1-j} = 0.
\]

This shows the result if $\dim_{\F_2} J(R) \geqslant 2$ or $J(R) = 0$.
The final case is if $\dim_{\F_2} J(R) = 1$, whence $J(R)^2 = 0$. By
Wedderburn's theorem, $R$ is a semidirect product of $J(R)$ and the
product ring $R / J(R) \cong \times_{j=1}^l M_{d_j \times
d_j}(\F_{2^{m_j}})$ for some integers $d_j, m_j > 0$, with $J(R)$ a
bimodule over $R / J(R)$. Now if $d_j > 1$ or $m_j > 1$ for any $j$, then
the simple algebra $M_{d_j \times d_j}(\F_{2^{m_j}})$ cannot have a
one-dimensional representation $J(R)$; thus it must act trivially on
$J(R)$. It follows by the indecomposability of $R$ that $d_j = m_j = 1\
\forall j$. Denote the idempotent in $\F_{2^{m_j}}$ by $1_j$. Then there
are unique $i,j$ such that $1_i J(R) = J(R) = J(R) 1_j$.

There are now two sub-cases. If $i=j$ then the product of finite fields
is a single field $\F_2$, whence $1_i$ is the unit and hence the
semidirect product $R$ is abelian. But then $R \cong \F_2[x] / (x^2)$,
which shows the result.
The other case is if $i \neq j$, in which case the algebra $R$ is
explicitly isomorphic to $T_{2 \times 2}(\F_2)$.
\end{proof}

As an intermediate step, we verify our main result in the last two cases
of Proposition \ref{P2}.

\begin{prop}\label{Pcalc2}
Theorem \ref{Tmain} holds for $R = \F_2[x] / (x^2)$ and $R = T_{2 \times
2}(\F_2)$.
\end{prop}

\begin{proof}
For $R = \F_2[x] / (x^2)$, the result follows from a straightforward
calculation. Next set $R = T_{2 \times 2}(\F_2)$, and note the powers of
individual matrices in $R$ are as follows: $\begin{pmatrix} 1 & 1\\ 0 &
1\end{pmatrix}^n = \begin{pmatrix} 1 & n\\ 0 & 1\end{pmatrix}$, the
nilpotent cone is $\F_2 x = \F_2 \begin{pmatrix} 0 & 1\\ 0 &
0\end{pmatrix}$, and the remaining five matrices are idempotent. Now a
straightforward computation shows that $\sum_{r \in R} r^k$ vanishes if
$k$ is even or $k=1$, and equals $x$ for $k>1$ odd.
\end{proof}

Our next result proves Theorem \ref{Tmain} when $R$ is not a cyclic group
and has characteristic $2^m$ for some $m>1$.

\begin{prop}\label{Peven}
Suppose $R$ has characteristic $2^m$ for some $m>1$. If $R$ is not a
cyclic group, then all power sums vanish.
\end{prop}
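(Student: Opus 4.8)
The plan is to induct on $m \geqslant 2$, following the reduction in Proposition~\ref{Podd} but carefully tracking the extra term that survives in characteristic $2$. Put $\I := 2^{m-1} R$; since $\mathrm{char}(R) = 2^m$ forces $2^{m-1}\cdot 1 \neq 0$ we have $\I \neq 0$, and $m \geqslant 2$ gives $\I^2 \subseteq 2^{2m-2} R = 0$ and $2\I = 2^m R = 0$, so $\I$ is an $\F_2$-vector space. Choosing coset representatives $\mathcal{C}$ of $\I$ in $R$ and expanding $(c+y)^k$ for $y \in \I$ — every monomial with two or more factors of $y$ lies in $\I^2 = 0$ — the first two equalities of \eqref{Ecompute}, valid in characteristic $2^m$ as noted there, yield
\[
\sum_{r \in R} r^k = |\I| \sum_{c \in \mathcal{C}} c^k + \sum_{c \in \mathcal{C}} \sum_{j=0}^{k-1} c^j \Big( \sum_{y \in \I} y \Big) c^{k-1-j}.
\]
Now $\sum_{y \in \I} y = 0$ whenever $\dim_{\F_2}\I \geqslant 2$, whereas if $\dim_{\F_2}\I = 1$ then $\I = \{0, e\}$ with $e = 2^{m-1}\cdot 1$ central, so the double sum collapses to $k\,e \sum_{c \in \mathcal{C}} c^{k-1}$. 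Everything thus comes down to controlling $\sum_c c^k$; once $\sum_{r \in R} r^k = 0$ is shown for all $k$, Equation~\eqref{Esum} gives $\calp_k^R(T) = 0$, since $|R|$ is a multiple of $2^m$ and hence $0$ in $R$.

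For the inductive step $m \geqslant 3$ I would pass to $R/\I = R/2^{m-1}R$, of characteristic $2^{m-1}$. Writing the additive group of $R$ as a sum of cyclic $2$-groups, the summands of maximal order $2^m$ merely shrink to order $2^{m-1}$ while the rest are untouched, so $R/\I$ has the same number of cyclic summands as $R$ and is again non-cyclic. As $m-1 \geqslant 2$, the induction hypothesis says all power sums over $R/\I$ vanish, i.e.\ $\sum_c c^k \in \ker(R \to R/\I) = \I$ for every $k$. The first term $|\I| \sum_c c^k$ then vanishes because $2 \mid |\I|$ and $2\I = 0$; and in the case $\dim_{\F_2}\I = 1$ the remaining term $k\,e \sum_c c^{k-1}$ lies in $e\I \subseteq \I^2 = 0$. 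This disposes of $m \geqslant 3$.

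The base case $m = 2$ (characteristic $4$) requires a separate argument, since $R/\I$ now has prime characteristic $2$ and may be one of the rings in Proposition~\ref{P2} with nonzero power sums. If $\dim_{\F_2}\I \geqslant 2$, then $\sum_y y = 0$ and $|\I| = 2^{\dim_{\F_2}\I}$ is divisible by $4 = 0$, so both terms vanish at once. If $\dim_{\F_2}\I = 1$, so $\I = \{0,2\}$, I would use that $R[2] := \{x \in R : 2x = 0\}$ is a two-sided ideal with $R/R[2] \cong 2R \cong \F_2$, giving a ring homomorphism $\psi : R \to \F_2$ characterised by $2x = 2\psi(x)$ (reading $\psi(x) \in \{0,1\}$). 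Since $\psi(c)^k = \psi(c) \in \{0,1\}$ for $k \geqslant 1$, one obtains $2\sum_c c^k = 2\cdot\#\{c \in \mathcal{C} : \psi(c) = 1\}$; as $\psi$ factors through $R/2R$ and is onto, this count equals $|R/2R|/2 = 2^{d-1}$ with $d = \dim_{\F_2}(R/2R) \geqslant 2$ (this is where the non-cyclicity of $R$ enters), so $2\sum_c c^k = 2^d = 0$. The identical computation annihilates $k\,e\sum_c c^{k-1} = 2k\sum_c c^{k-1}$, whence $\sum_{r \in R} r^k = 0$.

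The principal obstacle is precisely this base case. In characteristic $4$ with $\dim_{\F_2}\I = 1$ the quotient $R/2R$ can be $\F_2[x]/(x^2)$ or $T_{2\times 2}(\F_2)$, whose power sums are nonzero, so the clean inductive mechanism (pushing $\sum_c c^k$ into $\I$) is unavailable. What rescues the argument is that the lone factor $|\I| = 2$ multiplies a quantity governed by $|R/2R|/2$, which is forced to be even exactly because $R$ is non-cyclic; the delicate point is verifying that $R[2]$ is a genuine two-sided ideal with residue field $\F_2$, so that the homomorphism $\psi$ exists and is multiplicative.
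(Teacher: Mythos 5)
Your proof is correct, and its overall skeleton (induction on $m$, the ideal $\I = 2^{m-1}R$ with $\I^2 = 0$, the two terms of \eqref{Ecompute}, and the reduction of the inductive step to the non-cyclic quotient $R/2^{m-1}R$ of characteristic $2^{m-1}$) coincides with the paper's. Where you genuinely diverge is in the delicate sub-case $m=2$, $\dim_{\F_2}(2R)=1$. The paper handles it by invoking the classification of characteristic-$2$ rings with nonzero power sums (Proposition \ref{P2}): it rules out $M_{2\times 2}(\F_2)$, $\F_{2^e}$ and $\F_2$ for $R/2R$, leaving $T_{2\times 2}(\F_2)$ and $\F_2[x]/(x^2)$, and then uses the explicit computation of Proposition \ref{Pcalc2} that the power sums $s_k$ over these two rings are square-zero nilpotents, whence $2\overline{s}_k=0$ for any lift. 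You instead bypass Propositions \ref{P2} and \ref{Pcalc2} entirely: the homomorphism $\psi : R \to R/R[2] \cong \F_2$ (which exists because $R[2]$ is a two-sided ideal of index $2$, as you verify) linearizes $2c^k = 2\psi(c)$, and the surjectivity of $\psi$ on $R/2R$ turns $2\sum_c c^k$ into $2\cdot|R/2R|/2 = 2^d$ with $d \geqslant 2$ forced by non-cyclicity via Nakayama. Your route is more self-contained and makes transparent exactly where non-cyclicity is used; the paper's route is shorter given that Propositions \ref{P2} and \ref{Pcalc2} are already needed elsewhere, and has the side benefit of identifying which quotients $R/2R$ can actually occur. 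One tiny point worth making explicit in your inductive step: for $k=1$ the quantity $\sum_c c^{k-1} = (|R|/|\I|)\cdot 1_R$ still lies in $\I$ because $|R|/|\I|$ is divisible by $2^{m-1}$, so the term $k\,e\sum_c c^{k-1}$ dies there as well.
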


\begin{proof}
Without loss of generality, assume $2^{m-1} R \neq 0$.
We show the result by induction on $m \geqslant 2$.
For the base case of $m=2$, there are two sub-cases.
Set $\I := 2^{m-1} R$, and first suppose $\dim_{\F_2} \I \geqslant 2$. 
Then the result follows from the calculation in Equation
\eqref{Ecompute}, since $|\I| = 0$ in $R$ by assumption, and as explained
in the proof of Lemma \ref{L1}, $\sum_{y \in \I} y = 0$ for the
$\F_2$-vector space $\I$.

In the other sub-case $\dim_{\F_2} (2^{m-1}) = 1$, we have $\I = \{ 0,
2^{m-1} \}$, so $\I$ is central. Now Equation \eqref{Ecompute} shows:
\begin{equation}\label{Eeven}
\sum_{r \in R} r^k = 2 \sum_{c \in \mathcal{C}} c^k + 2^{m-1} k \sum_{c
\in \mathcal{C}} c^{k-1}.
\end{equation}

If $R$ has nonzero power sums, then by \eqref{Eeven}, so does the
$\F_2$-algebra $R / 2R$. It follows by Proposition \ref{P2} that $R/2R$
is the algebra of upper triangular matrices $T_{2 \times 2}(\F_2)$, or
else $\F_2[x] / (x^2), \F_{2^m}$, or $M_{2 \times 2}(\F_2)$. Since $R/2R$
has a one-dimensional simple module $2R$ of size $2$, we have $R/2R \neq
M_{2 \times 2}(\F_2), \F_{2^m}$ for $m>1$. If $R/2R = \F_2$, then by
Nakayama's Lemma $R$ is also cyclic, which is impossible.

Thus, $R/2R \cong T_{2 \times 2}(\F_2)$ or $\F_2[x]/(x^2)$. In both
cases, recall by Proposition \ref{Pcalc2} that every power sum is
nilpotent, say $\sum_{c \in R/2R} c^k =: s_k$, with $s_k^2 = 0$. Choose
any lift $\overline{s}_k \in R$ of $s_k$; then \eqref{Eeven} implies:
\[
\sum_{r \in R} r^k = 2 \overline{s}_k + 2k \overline{s}_{k-1}.
\]

\noindent We now claim that if $s^2 = 0$ in $R/2R$, then $2 \overline{s}
= 0$ for any lift $\overline{s}$; note this claim proves the $m=2$ case.
Indeed, if $2 \overline{s} \in 2R \setminus \{ 0 \}$ then $2 \overline{s}
= 2$, so $2 \overline{s}^2 = 2$. But $\overline{s}^2 \in 2R$, so we get
$2 = 2 \overline{s}^2 = 0$, which is impossible.

Having proved the result for $m=2$, the inductive step uses Equation
\eqref{Ecompute}, by adapting for $p=2$ the proof of Proposition
\ref{Podd} -- specifically, the $m>2$ case.
\end{proof}

Combining all of the above analysis, we now show the main theorem above.

\begin{proof}[Proof of Theorem \ref{Tmain}]
The first assertion in \eqref{Eproduct} was shown above in
\eqref{Etimes}. Thus, assume henceforth that $R$ has prime power
characteristic. Now the result is shown
for $R = M_{2 \times 2}(\F_2)$ via Theorem \ref{Tbcl}, and
for $R = \F_2[x] / (x^2)$ as well as $T_{2 \times 2}(\F_2)$ in
Proposition \ref{Pcalc2}.

Next, suppose $R$ is indecomposable and of prime power characteristic. By
the above analysis in Propositions \ref{Podd}, \ref{P2}, \ref{Pcalc2},
and \ref{Peven}, Theorem \ref{Tmain} is shown for all such $R$ except for
$R = \Z/p^m\Z$, where $p \geqslant 2$ is prime and $m \geqslant 2$ (so
that $R$ is not a finite field). Thus we assume henceforth that $R = \Z /
p^m\Z$ with $m \geqslant 2$, and compute $\zeta_R(-k)$.

First assume $p$ is odd or $k$ is even. We claim that for all $m \in
\mathbb{N}$,
\begin{equation}\label{Ecyclic}
\zeta_{\Z / p^{m+1} \Z}(-k) = p\; \zeta_{\Z / p^m \Z}(-k) \mod p^{m+1}.
\end{equation}

\noindent Indeed, compute for an indeterminate $x$:
\[
x^k + (x+p^m)^k + \cdots + (x+(p-1)p^m)^k \equiv p x^k + \binom{k}{1}
x^{k-1} p^m \sum_{j=0}^{p-1} j \mod p^{2m},
\]

\noindent whence the same equality holds modulo $p^{m+1}$. Now if either
$p$ is odd or $p,k$ are even, then the second term on the right vanishes.
It follows that
\[
\zeta_{\Z / p^{m+1} \Z}(-k) = \sum_{i=0}^{p^m-1} \sum_{j=0}^{p-1} (i +
jp^m)^k \equiv p \sum_{i=0}^{p^m-1} i^k \mod p^{m+1},
\]

\noindent upon applying the above analysis to $x \leadsto i$. This shows
\eqref{Ecyclic}, and hence, the result.

It remains to consider the case when $p=2 \leqslant m$ and $k$ is odd. If
$k=1$ then the result is straightforward. We now claim that if $k>1$ is
odd, then
\[
\zeta_{\Z / 2^m \Z}(-k) = 0, \qquad \forall m \geqslant 2.
\]

The proof is by induction on $m$. The $m=2$ case is immediate. Next, if $m
\geqslant 2$,
\[
\zeta_{\Z / 2^{m+1} \Z}(-k) = \sum_{i=0}^{2^m-1} [i^k + (2^m+i)^k]
= \sum_{i=0}^{2^m-1} [2 i^k + 2^m k i^{k-1}].
\]

\noindent Modulo $2^{m+1}$, the coefficient $2^m k$ in the summand can be
replaced by $2^m$, whence
\begin{equation}\label{Etemp}
\zeta_{\Z / 2^{m+1} \Z}(-k) \equiv 2 \zeta_{\Z / 2^m \Z}(-k) + 2^m
\zeta_{\Z / 2^m \Z}(-(k-1)) \mod 2^{m+1}.
\end{equation}

\noindent By the above analysis, the last term on the right equals
\[
2^m \cdot 2^{m-1} \zeta_{\F_2}(-(k-1)) \mod 2^{m+1},
\]
and this vanishes since $m \geqslant 2$. The first term on the right-hand
side of \eqref{Etemp} also vanishes modulo $2^{m+1}$, given the induction
hypothesis.

The above arguments show Theorem \ref{Tmain} for every indecomposable
finite unital ring $R$.
Now suppose $R$ is a general finite unital ring of prime power
characteristic, say $p^m$. Write $R = \times_j R_j$ as a product of
indecomposable factors; then by the above analysis, the prime $p$ kills
$\zeta_{R_j}(-k)$ for all $j$.
Thus, if there are at least two factors $R_j$, then $\zeta_R(-k) = 0$ by
\eqref{Etimes}. This concludes the proof of the theorem.
\end{proof}

\section{Translation-invariant polynomials over prime power
characteristic rings}

In this section we work over polynomial rings $R[T]$, where $T$ is an
indeterminate and $R$ an arbitrary finite ring as above (later in the
section, we relax the finiteness assumption). Our goal is to prove two
results. The first result translates the problem of computing zeta values
$\zeta_R(-k)$ into computing
\[
\calp_k^R(T) := \sum_{r \in R} (T+r)^k.
\]

If $R$ is a finite field, then this goes back to (and is subsumed by) the
more general work of Carlitz; we include \cite{Ca1,Ca2} as well as the
survey article \cite{Ca3}. The setting of Carlitz involves the above sum
being replaced by the sum over all monic polynomials of specified degree
$d$, i.e.,
\[
\calp_k^{\F_q,d}(T) := \sum_{f \in \F_q[T],\ \deg(f) < d} (T^d + f(T))^k,
\qquad k \in \Z.
\]

\noindent Summing over $d \geqslant 0$ yields the zeta function of
$\F_q[T]$, which has important connections to (rank one) Drinfeld modules
and to $q$-expansions of Eisenstein series. The congruence properties of
these sums are also linked to class number formulas, leading to
Kummer-type criteria for abelian extensions of $\F_q(T)$; see
e.g.~\cite{Gos2}. For completeness, we point the reader to \cite{Ge,Gos1}
and the recent survey article \cite{Th} for more on these connections,
related topics, and references.

Returning to our setting over a general finite ring, we begin by
reformulating Theorem \ref{Tmain} into a polynomial \textit{avatar}.
Namely: the following result computes the aforementioned polynomials
$\calp_k^R(T)$ for every $k>0$ and finite ring $R$.

\begin{theorem}\label{Tpoly}
(Notation as in Theorem \ref{Tmain}.)
Suppose $R = \times_{i=1}^l R_i$ is an arbitrary finite unital ring. Then
\begin{equation}
\calp^R_k(T) = \sum_{i=1}^l \frac{|R|}{|R_i|} \calp^{R_i}_k(T),
\end{equation}

\noindent where $k \in \mathbb{N}$, and the $i$th summand lives in
$R_i[T]$.

Now consider the problem for each factor $R = R_i$, with characteristic
equal to a power of a prime $p>0$. Letting $x := \begin{pmatrix} 0 & 1\\
0 & 0 \end{pmatrix}$, we have:
\begin{equation}
\calp^R_k(T) = \begin{cases}
\calp_k^{\F_q}(T), & \text{if } R = \F_q, \text{ where } q = p^e;\\
p^{m-1} \calp_k^{\F_p}(T), & \text{if } R = \Z / p^m \Z, \text{ and } p>2
\text{ or } p = 2|k;\\
2^{m-1} \calp_k^{\F_2}(T) + 2^{m-1} \calp_{k-1}^{\F_2}(T), &
\text{if } R = \Z / 2^m \Z, \text{ and } 2 \nmid k;\\
x \calp_{k-1}^{\F_2}(T) = x \sum_{1<j\ {\rm odd}} \binom{k}{j} T^{k-j},
& \text{if } R = \F_2[x] / (x^2)
\text{ or } T_{2 \times 2}(\F_2), \text{ and } 2 \nmid k;\\
{\rm Id}_{2 \times 2} \cdot \sum_{1<j \equiv 0, \pm 1 \hspace*{-2mm} \mod
6} \binom{k}{j} T^{k-j}, & \text{if } R = M_{2 \times 2}(\F_2);\\
0, & \text{otherwise}.
\end{cases}
\end{equation}

\noindent Here, for any finite field we have:
\begin{equation}\label{Ewaring}
\calp_k^{\F_q}(T) = - \sum_{\gamma = \lfloor k/q \rfloor + 1}^{\lfloor
k/(q-1) \rfloor} \binom{\gamma - 1}{k - (q-1)\gamma}
(T^q-T)^{k-(q-1)\gamma} \in \F_p[T^q-T].
\end{equation}
\end{theorem}

The proof of the final identity \eqref{Ewaring} uses a preliminary
result.

\begin{lemma}\label{P10}
Given $1 \leqslant k \leqslant q$, define
\[
\fk := \{ S \subset \F_q : |S| = k \}, \quad
\Sigma_k(T) := \sum_{S \in \fk} \prod_{r \in S} (T+r), \quad
\Sigma_{-k}(T) := \sum_{S \in \fk} \prod_{r \in S} (T+r)^{-1}.
\]

\noindent Then for $1 \leqslant |k| \leqslant q$,
\begin{equation}
\Sigma_k(T) =
\begin{cases}
    -1, 	&\text{if $k=q-1$;}\\
    T^q-T,	&\text{if $k=q$;}\\
    -1/(T^q-T) = S_{-1}(T),	&\text{if $k=-1$;}\\
    1/(T^q-T),	&\text{if $k=-q$;}\\
    0,		&\text{otherwise.}
\end{cases}
\end{equation}
\end{lemma}

\begin{proof}
Since each $\Sigma_k(T)$ is also translation-invariant, by standard
Galois theory it is a polynomial in $T^q-T$, with $T$-degree at most $k$.
Thus for $0 \leqslant k<q$, $\Sigma_k(T)$ is a constant, hence equals
$\Sigma_k(0)$. Now compute using Equation \eqref{Efactor}:
\[
T^q - T = \prod_{r \in \F_q} (T-r) = T^q - T^{q-1} \Sigma_1(0) + T^{q-2}
\Sigma_2(0) - \dots + (-1)^q \Sigma_q(0).
\]

\noindent More precisely, replace $n$ by $q$ and apply the homomorphism
$: \F_q[\{ x_r : r \in \F_q \}] \to \F_q$, sending $x_r \mapsto r$, to
Equation \eqref{Efactor}.
Thus, $\Sigma_1(0) = \cdots = \Sigma_{q-2}(0) = 0$, and
\[
(-1)^{q-1} \Sigma_{q-1}(0) = -1.
\]

\noindent If $q$ is even, then $p=2$, and $(-1)^{q-1} = \pm 1 = 1$ in
$\F_q$. Otherwise, $q$ is odd, and $(-1)^{q-1} = 1$. In either case, we
get: $\Sigma_{q-1}(0) = -1$.

Next if $k=q$, then $\fk = \{ \F_q \}$, so $\Sigma_q(T) = \prod_{r \in
\F_q} (T-r) = T^q-T$. This also shows that $\Sigma_{-q}(T) = 1 / (T^q -
T)$.
Finally, if $0 < k < q$, then taking common denominators yields:
$\Sigma_{-k}(T) = \Sigma_{q-k}(T) / \Sigma_q(T)$. This concludes the
proof.
\end{proof}

Now we can prove the above theorem.

\begin{proof}[Proof of Theorem \ref{Tpoly}]
We prove the result in reverse order of the cases presented, ending by
proving the case of $R = \F_q$ in \eqref{Ewaring}. Most of the result
follows from Theorem \ref{Tmain}, via the identity:
\begin{equation}\label{Esum}
\calp_k^R(T) = \sum_{r \in R} (T+r)^k = |R| T^k + \sum_{j=1}^k
\binom{k}{j} \zeta_R(-j) T^{k-j}.
\end{equation}

\noindent In particular, if $\sum_{r \in R} r^k = 0$ for all $k>0$, then
$\calp_k^R(T) = |R| T^k = 0$. Similarly, the result for $R = M_{2 \times
2}(\F_2)$ follows from Theorem \ref{Tbcl} via \eqref{Esum}.

Continuing \textit{up} the list of cases in Theorem \ref{Tpoly}, we have
$R = \F_2[x] / (x^2)$ or $T_{2 \times 2}(\F_2)$. If $k$ is even, then from
\eqref{Esum} we obtain:
\[
\calp_k^R(T) = \sum_{j>0, \ j \text{ odd}} \binom{k}{j} \zeta_R(-j)
T^{k-j} = x \sum_{j > 0, \ j \text{ odd}} \binom{k}{j} T^{k-j},
\]
using Theorem \ref{Tmain}. Thus it suffices to show that $\binom{k}{j} =
0$ for $k$ even and $j$ odd. This follows from Lucas' theorem \cite{Lu};
we also provide a direct proof. Consider the coefficient of $y^j$ in
expanding $(1+y)^k$ over $\Z/2\Z$. If $k=2l$, then we compute using the
Frobenius:
\[
(1+y)^k = ((1+y)^2)^l = (1+y^2)^l,
\]

\noindent from which it follows that the coefficient of $y^j$ vanishes
modulo $2$, as desired.

Otherwise if $k$ is odd, then from Pascal's triangle and the previous
paragraph, we have:
\[
\binom{k}{j} \equiv \binom{k-1}{j-1} + \binom{k-1}{j} \equiv
\binom{k-1}{j-1} \mod 2,
\]
whenever $1 \leqslant j \leqslant k$ and $j,k$ are odd. Using
\eqref{Esum} and Theorem \ref{Tmain}, as well as the previous paragraph,
it follows that
\begin{align*}
\calp_k^R(T) = &\ x \sum_{j > 1, \ j \text{ odd}} \binom{k}{j} T^{k-j}
= x \sum_{j > 0, \ j \text{ even}} \binom{k-1}{j} T^{k-j}\\
= &\ x \sum_{j > 0} \binom{k-1}{j} T^{k-j} = x (T^{k-1} + (1+T)^{k-1})
= x \calp_{k-1}^{\F_2}(T).
\end{align*}
This proves the relevant case of Theorem \ref{Tpoly}; note that if $k=1$
then we obtain $x \calp_0^{\F_2}(T) = 0$.

Next suppose $R = \Z / p^m \Z$, with $p \geqslant 2$ prime and $m
\geqslant 2$.
First if $p$ is odd or $k$ is even or $k=1$, then we are done by
\eqref{Esum} and Theorem \ref{Tmain}.
Otherwise suppose $p=2 < k$, with $k$ odd. We first show the ``induction
step'', relating $\calp_k^{\Z / 2^{e+1} \Z}$ to $\calp_k^{\Z / 2^e \Z}$:
\[
\calp_k^{\Z / 2^{e+1} \Z}(T) = \sum_{i=0}^{2^{e+1}-1} (T+i)^k =
\sum_{i=0}^{2^e-1} [(T+i)^k + (T+2^e+i)^k] = \sum_{i=0}^{2^e-1} [2
(T+i)^k + 2^e k (T+i)^{k-1}].
\]

\noindent Modulo $2^{e+1}$, the coefficient $2^e k$ in the summand can be
replaced by $2^e$, whence
\[
\calp_k^{\Z / 2^{e+1} \Z}(T) \equiv 2 \calp_k^{\Z / 2^e \Z}(T) + 2^e
\calp_{k-1}^{\Z / 2^e \Z}(T) \mod 2^{e+1}.
\]

\noindent By the previous case, the last term equals
\[
2^e \calp_{k-1}^{\Z / 2^e \Z}(T) \equiv 2^e \cdot 2^{e-1}
\calp_{k-1}^{\F_2}(T) \mod 2^{e+1},
\]

\noindent and this vanishes if $e \geqslant 2 = p$. Repeating this
computation inductively,
\begin{align*}
\calp_k^{\Z / 2^m \Z}(T)
\equiv &\ 2 \calp_k^{\Z / 2^{m-1} \Z}(T) + 2^{m-1}
\calp_{k-1}^{\Z/2^{m-1}\Z}(T)\\
\equiv &\ 4 \calp_k^{\Z / 2^{m-2} \Z}(T) + 2^{m-1} \calp_{k-1}^{\Z/
2^{m-2} \Z}(T) + 0\\
\equiv &\ \cdots\\
\equiv &\ 2^{m-2} \calp_k^{\Z / 4 \Z}(T) + 2^{m-1} \calp_{k-1}^{\Z / 4
\Z}(T) + 0\\
\equiv &\ 2^{m-1} \calp_k^{\Z / 2 \Z}(T) + 2^{m-1} \calp_{k-1}^{\Z / 2
\Z}(T) + 0 \mod 2^m.
\end{align*}

\noindent This shows the result when $p=2$ and $k$ is odd, and hence for
all rings $R = \Z / p^m \Z$.

The final step is to show \eqref{Ewaring}. For this we recall the theory
of symmetric polynomials over a commutative unital ring $A$. Note that
the elementary symmetric polynomials
\[
\sigma_k := \sum_{1 \leqslant i_1 < i_2 < \dots < i_k \leqslant n}
x_{i_1} x_{i_2} \cdots x_{i_k} \qquad (1 \leqslant k \leqslant n),
\]
which appear when expanding a linear factorization of a monic polynomial:
\begin{equation}\label{Efactor}
\prod_{i=1}^n (T - x_i) = T^n - \sigma_1 T^{n-1} + \sigma_2 T^{n-2} +
\dots + (-1)^n \sigma_n,
\end{equation}

\noindent can be used to write the power-sum polynomials
\[
p_k(x_1, \dots, x_n) := \sum_{j=1}^n x_j^k, \qquad k \geqslant 0
\]
via the following well-known formula:

\begin{theorem}[Waring's Formula; see e.g. \cite{Gou}]\label{T9}
Given $R[x_1, \dots, x_n]$, for all $k>0$,
\[
p_k = \sum (-1)^{i_2 + i_4 + i_6 + \cdots} \frac{(i_1 + i_2 + \cdots +
i_n - 1)! k}{i_1! i_2! \cdots i_n!} \sigma_1^{i_1} \sigma_2^{i_2} \cdots
\sigma_n^{i_n} \in \Z[\sigma_1, \sigma_2, \dots, \sigma_n],
\]

\noindent where the sum is over all $(i_1, \dots, i_n) \in \Z^n$ such
that all $i_j \geqslant 0$ and $\sum_j i_j j = k$.
\end{theorem}

Now we conclude the proof of the theorem by showing \eqref{Ewaring}. We
first claim:
\begin{equation}\label{Eintermediate}
\calp_k^{\F_q}(T) = \sum_{0 \leqslant \alpha, \beta \in \Z : (q-1)\alpha
+ q\beta = k} \frac{(\alpha + \beta - 1)! k}{\alpha! \beta!} (T^q -
T)^\beta \in \F_p[T].
\end{equation}

Apply Waring's Formula (from Theorem \ref{T9}) and Lemma \ref{P10}. Set
$R = \F_q$ and consider the $\F_p$-algebra homomorphism $: \F_p[\{ x_r :
r \in \F_q \}] \to \F_q[T]$, sending $x_r \mapsto T+r$. This sends
$\sigma_k \mapsto \Sigma_k(T)$ for all $1 \leqslant k \leqslant q$, and
$p_k \mapsto \calp_k^{\F_q}(T)\ \forall k>0$. Hence $\calp_k^{\F_q}(T)$
is computable using Waring's Formula. Notice that every summand
containing a positive power of (the image of) $\sigma_1, \sigma_2,
\dots$, or $\sigma_{q-2}$ vanishes by Lemma \ref{P10}. Now changing
indices from $i_{q-1}, i_q$ to $\alpha, \beta$ respectively,
\[
\calp_k^{\F_q}(T) = \sum_{0\leqslant \alpha, \beta \in \Z : \alpha(q-1) +
\beta q = k} (-1)^{\epsilon} \frac{(\alpha + \beta - 1)!k}{\alpha!
\beta!} \Sigma_{q-1}(T)^\alpha \Sigma_q(T)^\beta,
\]

\noindent with two cases. If $q$ is odd, then $\epsilon = \alpha$, and
the summand equals
\[
\frac{(\alpha + \beta - 1)!k}{\alpha! \beta!} \cdot
(-1)^\alpha \Sigma_{q-1}(T)^\alpha = 
\frac{(\alpha + \beta - 1)!k}{\alpha! \beta!},
\]

\noindent which proves \eqref{Eintermediate}.
If instead $q$ is even, then $\epsilon = \beta$, whence
\eqref{Eintermediate} follows similarly.

Having shown \eqref{Eintermediate}, we now derive \eqref{Ewaring} from
it. If $\alpha = 0$ then the corresponding coefficient of $(T^q-T)^\beta$
is $\frac{(\beta-1)! (q \beta)}{\beta!} = q = 0$ in $\F_q$. Thus we may
assume $\alpha > 0$. Now change variables from $\beta$ to $\gamma =
\alpha + \beta$. Then Equation \eqref{Eintermediate} yields:
\[
\calp_k^{\F_q}(T) = \sum_{0 < \alpha \leqslant \gamma\ :\ q \gamma -
\alpha = k} \frac{(\gamma - 1)! k}{\alpha! (\gamma - \alpha)!} (T^q -
T)^{\gamma - \alpha}.
\]

\noindent Since $\alpha > 0$, and $\beta = \gamma - \alpha = k -
(q-1)\gamma$, it follows that
\[
\frac{(\gamma - 1)! k}{\alpha! (\gamma - \alpha)!} = 
\frac{(\gamma - 1)! q \gamma}{\alpha! (\gamma - \alpha)!} -
\frac{(\gamma - 1)! \alpha}{\alpha! (\gamma - \alpha)!}
= -\binom{\gamma-1}{k - (q-1)\gamma}.
\]


\noindent Now the result \eqref{Ewaring} follows by putting together the
above arguments. Finally, note that the coefficients are in $\Z / p\Z$,
because Waring's Formula has integer coefficients.
\end{proof}

\begin{remark}
Using Lemma \ref{P10} and Waring's formula, one shows similarly that
\begin{align}
\begin{aligned}
\calp_{-k}^{\F_q}(T) = &\ \sum_{0 \leqslant \alpha, \beta \in \Z\ :\
\alpha + q\beta = k} \frac{(\alpha + \beta - 1)! k}{\alpha! \beta!}
\frac{(-1)^\alpha}{(T^q - T)^{\alpha+\beta}}\\
= &\ \sum_{\beta=0}^{\lceil k/q \rceil - 1} \binom{k-(q-1)\beta-1}{\beta}
\frac{(-1)^{k-q\beta}}{(T^q-T)^{k-(q-1)\beta}},
\end{aligned}
\end{align}

\noindent where $\alpha = k - q\beta$, and $k>0$. Note, the
translation-invariant polynomial (in the variables $1/(T+r)$ for $r \in
\F_q$) lies in $\F_p[(T^q-T)^{-1}] = \F_p[\Sigma_{-q}(T)]$.
\end{remark}

Notice that the polynomials $\calp_k^R(T)$ are translation-invariant:
$\calp_k^R(T+r) = \calp_k^R(T)$ for all $r \in R$. The final result of
this paper is in this spirit, and classifies the translation-invariant
polynomials over the three finite commutative rings that feature in
Theorem \ref{Tmain}: $\F_q$, $\Z/p^m\Z$, and $\F_2[x]/(x^2)$. More
strongly: we combine the three rings into a common framework using Witt
vectors \cite{Wi}, and classify the translation-invariant polynomials
over all such rings, finite or not.

\begin{theorem}\label{Twitt}
Fix integers $e,m > 0$ and a prime power $q = p^e$ with $p>0$.
Denote by $\W_m(\F_q)$ the corresponding Witt ring of vectors of length
$m$. Now suppose $R = \m \oplus \W_m(\F_q)$ is a commutative unital local
ring, where $\m$ is a free $\W_m(\F_q)$-module, as well as an ideal of
nilpotence class $k$ for some $1 \leqslant k \leqslant p$. Then the set
of translation-invariant polynomials in $R[T]$ is given by
\begin{equation}\label{Ewitt}
\bigoplus_{n \geqslant 0} R \cdot (T^q-T)^{n p^m} \oplus
\bigoplus_{i=0}^{m-1} \bigoplus_{p \nmid n} p^{m-1-i} \Ann_R(p^{m-1} \m)
\cdot (T^q-T)^{n p^i}.
\end{equation}
\end{theorem}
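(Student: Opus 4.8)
The plan is to set $u := T^q - T$ and to study the $R$-subalgebra $\mathcal{T} \subseteq R[T]$ of translation-invariant polynomials. The first observation is that for each fixed $f$ the set $\{r \in R : f(T+r) = f(T)\}$ is an additive subgroup of $R$, so $f \in \mathcal{T}$ iff $f$ is invariant under a generating set of $(R,+)$; equivalently, expanding $f(T+r) = \sum_{j \geq 0} r^j (D^{(j)} f)(T)$ in Hasse derivatives and writing $f = \sum_e c_e T^e$, membership in $\mathcal{T}$ amounts to the vanishing on all of $R$ of each polynomial $\sum_{j \geq 1} \binom{i+j}{j} c_{i+j} S^j$, for $i \geq 0$. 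I will repeatedly use two elementary facts. First, the hypothesis $k \leqslant p$ forces $\m^p = 0$, so that $r^q = 0$ for every $r \in \m$. Second, Kummer's formula $v_p\binom{p^e}{j} = e - v_p(j)$ (for $0 < j < p^e$), together with the standard bound $v_p\binom{a}{b} \geqslant v_p(a) - v_p(b)$, shows that the nonconstant part of $u(T+r) - u(T) = (r^q - r) + \sum_{0 < j < q}\binom{q}{j} r^j T^{q-j}$ is divisible by $p$, while its constant term $r^q - r$ lies in $p\,\W_m(\F_q) \oplus \m$ (the $q$-power map being the identity on the residue field $\F_q$).

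\emph{Easy inclusion.} I would first verify that every summand displayed in \eqref{Ewitt} lies in $\mathcal{T}$. Writing $u(T+r) = u + \delta_r$ and expanding $(u+\delta_r)^d - u^d = \sum_{j \geq 1}\binom{d}{j} u^{d-j}\delta_r^{\,j}$, the claim for $R \cdot u^{np^m}$ follows because $v_p\binom{np^m}{j} \geqslant m - v_p(j)$, and the $p$-content of $\delta_r^{\,j}$ makes up any deficit, so each term acquires the factor $p^m = 0$ in $R$. For the summands $p^{m-1-i}\,\Ann_R(p^{m-1}\m)\cdot u^{np^i}$ with $0 \leqslant i \leqslant m-1$ and $p \nmid n$, the same expansion shows that the prefactor $p^{m-1-i}$ supplies exactly the power of $p$ missing from the $\W_m(\F_q)$-part of $\delta_r$, while the residual term, which lands in $p^{m-1}\m$, is annihilated precisely because the coefficient is taken in $\Ann_R(p^{m-1}\m)$. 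This direction is a bounded computation using the two facts above.

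\emph{Hard inclusion.} The reverse containment reduces, by strong induction on $\deg f$, to a single statement: if $f \in \mathcal{T}$ has degree $N \geqslant 1$ and leading coefficient $c_N$, then $q \mid N$ and $c_N$ lies in the coefficient module prescribed for $u^{N/q}$ in \eqref{Ewitt}. Granting this, the easy inclusion guarantees $c_N u^{N/q} \in \mathcal{T}$, so $f - c_N u^{N/q} \in \mathcal{T}$ has strictly smaller degree and the induction closes, simultaneously yielding $f \in R[u]$ and the coefficient constraints. To prove $q \mid N$ I would apply a $\W_m(\F_q)$-linear coordinate projection $R \twoheadrightarrow \W_m(\F_q)$ under which $c_N$ is nonzero; since such projections commute with translation by elements of $\W_m(\F_q)$, the image of $f$ is translation-invariant over the chain ring $\W_m(\F_q)$, and projecting further onto its socle $p^{m-1}\W_m(\F_q) \cong \F_q$ — on which translations act only through the residue field — reduces the claim to the base-change identity that the $\F_q$-translation-invariants of $A[T]$ equal $A[u]$ for any commutative $\F_q$-algebra $A$, forcing $q \mid N$. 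To pin down the membership of $c_N$, I would read off the vanishing of the difference $f(T+r)-f(T)$ coefficient-by-coefficient just below the top degree: evaluating at Teichm\"uller units $r$ and invoking Kummer's valuation of $\binom{N}{j}$ should yield the exact power $p^{m-1-v_p(N/q)}$, while evaluating at $r \in \m$ should yield the annihilator condition tying the unit part of $c_N$ to $\Ann_R(p^{m-1}\m)$.

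I expect this last step to be the main obstacle. The difficulty is twofold. The binomial-valuation bookkeeping must be carried out uniformly in $i = v_p(N/q)$ so as to recover \emph{exactly} the power $p^{m-1-i}$ rather than the weaker power $p^{m-e-i}$ that the single relation $Nc_N = 0$ provides when $e \geqslant 2$; and the lower coefficients $c_{N-1}, c_{N-2}, \dots$ contribute to the very same sub-top coefficients and must be disentangled from the contribution of $c_N$. Controlling this coupling — rather than the structural reductions, which are routine once the $\F_q$-invariant base-change fact and the socle projections are in place — is where the real work lies, and it is precisely here that the freeness of $\m$ over $\W_m(\F_q)$ and the bound $k \leqslant p$ enter, to ensure that the nilpotent cross-terms vanish at the required rate.
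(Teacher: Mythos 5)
Your easy inclusion is essentially the paper's Step 1 and is a plausible sketch. The hard inclusion, however, has a genuine gap already at the claim $q \mid N$. You propose to pick a $\W_m(\F_q)$-coordinate projection $\pi$ of $R$ with $\pi(c_N) \neq 0$ and then push further down to the socle $p^{m-1}\W_m(\F_q) \cong \F_q$ (or to the residue field). Both of these further maps kill $p\,\W_m(\F_q)$, so whenever $\pi(c_N) \in p\,\W_m(\F_q)$ the image of $f$ drops in degree and you learn nothing about $N$. This is not a vacuous worry: $f = p^{m-1}(T^q - T)$ is a nonconstant translation-invariant polynomial over $\W_m(\F_q)$ (for $m \geqslant 2$) that is annihilated by both projections, and your method cannot distinguish it from a hypothetical invariant of degree prime to $q$ with leading coefficient in $pR$. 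The paper avoids any leading-coefficient analysis here: since $f - f(0)$ vanishes at every Teichm\"uller point $\omega_m(a)$ and the monic linear factors $T - \omega_m(a)$ are pairwise coprime, $T^q - T = \prod_{a}(T - \omega_m(a))$ divides $f - f(0)$, and induction on degree gives $f \in R[T^q-T]$ outright (Step 2 of the paper). You would need either that argument or an extra d\'evissage (divide by $p$ and descend to $\W_{m-1}(\F_q)$ when all coefficients are divisible by $p$) to repair your reduction.

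Second, you explicitly leave the determination of the coefficient module --- the step where one must extract the exact power $p^{m-1-i}$ rather than the weaker $p^{m-e-i}$ that the single relation $N c_N = 0$ yields when $e \geqslant 2$ --- as an unresolved ``main obstacle.'' That step is the substance of the theorem, so the proposal is incomplete precisely where it matters. For comparison, the paper does not disentangle the sub-top coefficients head-on. It first settles $R = \Z/p^m\Z$, where $e = 1$ and the relation $N c_N \equiv 0 \bmod p^m$ plus induction on $\deg f$ already gives the exact power; it then treats $\W_m(\F_q)$ by expanding $f$ in a $\Z/p^m\Z$-basis, reducing mod $p$, recognizing the reduction as a Frobenius twist of an $\F_q$-invariant, subtracting a lifted known invariant, dividing by $p$, and inducting on $m$; finally the case $\m \neq 0$ is handled by a leading-term computation against translations by $r \in \m$, which is exactly where the condition $c_{ni} \in \Ann_R(p^{m-1}\m)$ emerges. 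Some such bootstrapping (base $q=p$, then induction on $m$, then $\m$) appears to be unavoidable, and your proposal does not yet contain it.
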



\noindent Observe that the formula in \eqref{Ewitt} does not depend on
the nilpotence class $k$ as long as $k \leqslant p$.

Theorem \ref{Twitt} fits into a broader setting, studied in \cite{SV}:
consider a commutative unital ring $R_m$ that is flat over $\Z / p^m \Z$
for some $m>0$ and a fixed prime integer $p>0$, and suppose
$A_m$ is a flat $R_m$-algebra. (In our setting, $R_m = \W_m(\F_q)$, or
more generally $\W_m(\F_q)[x] / (x^k)$ for $k>0$, and $A_m = R_m[T]$.)
The notion of translation-invariance is now replaced by the more general
situation of invariance under a set $S$ of $R_m$-algebra automorphisms of
$A_m$. Then the treatment in \cite{SV} suggests the following recipe to
produce $S$-invariants in $A_m$.

\begin{lemma}\label{Llift}
Under the above setup, define for $1 \leqslant j \leqslant m$ the algebra
\[
R_j := R_m \otimes_{\Z / p^m \Z} \Z / p^j \Z,
\]

\noindent and $A_j$ to be the corresponding algebra to $A_m$ but defined
over $R_j$. Now if $\overline{S}$ denotes the group of automorphisms
induced on the $R_1$-algebra $A_m / p A_m = A_1$, and $a_1 \in
A_1^{\overline{S}}$ is fixed by $\overline{S}$, then the elements
\[
p^{m-1-i} \widetilde{a_1}^{p^i} \in A_m, \qquad 0 \leqslant i < m
\]

\noindent are all fixed by $S$, where $\widetilde{a_1} \in A_m$ denotes
any lift of $a_1 \in A_1$.
\end{lemma}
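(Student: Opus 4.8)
The plan is to verify directly that, for an arbitrary $\sigma \in S$ and each $0 \leqslant i < m$, the element $p^{m-1-i}\widetilde{a_1}^{p^i}$ is fixed by $\sigma$, by reducing the whole statement to a single $p$-adic divisibility estimate. The one structural fact I expect to use is that $A_m$ is a $\Z/p^m\Z$-algebra, so that $p^m A_m = 0$; this is all I will need from the flatness hypotheses in \emph{this} lemma. Since $\widetilde{a_1}$ reduces modulo $p$ to $a_1$, and $\sigma$ descends to the induced automorphism $\overline{\sigma} \in \overline{S}$ of $A_1 = A_m/pA_m$, which fixes $a_1$ by assumption, the element $\sigma(\widetilde{a_1})$ is again a lift of $a_1$. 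Hence I can write $\sigma(\widetilde{a_1}) = \widetilde{a_1} + pb$ for some $b \in A_m$ depending on $\sigma$.

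Next, using that $\sigma$ is an $R_m$-algebra homomorphism and that $A_m$ is commutative, I would apply the binomial theorem to obtain
\[
\sigma\bigl(\widetilde{a_1}^{p^i}\bigr) = (\widetilde{a_1}+pb)^{p^i} = \widetilde{a_1}^{p^i} + \sum_{j=1}^{p^i}\binom{p^i}{j}\,p^j\,\widetilde{a_1}^{p^i-j}b^j.
\]
Multiplying through by $p^{m-1-i}$, it then suffices to show that every cross term vanishes, i.e. that $p^m \mid p^{m-1-i}\binom{p^i}{j}p^j$ for each $1 \leqslant j \leqslant p^i$, since $p^m A_m = 0$. This would immediately give $p^{m-1-i}\sigma(\widetilde{a_1}^{p^i}) = p^{m-1-i}\widetilde{a_1}^{p^i}$, as desired, and would also explain why the element is independent of the chosen lift $\widetilde{a_1}$: any two lifts differ by an element of $pA_m$, and the same expansion kills the difference.

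The hard part is purely the arithmetic of the binomial coefficients, so I regard the valuation bound $v_p\bigl(p^{m-1-i}\binom{p^i}{j}p^j\bigr) \geqslant m$ as the technical heart of the argument. To establish it I would use the identity $j\binom{p^i}{j} = p^i\binom{p^i-1}{j-1}$ together with Lucas's theorem: since $p^i - 1$ has all base-$p$ digits equal to $p-1$, we have $p \nmid \binom{p^i-1}{j-1}$, and therefore $v_p\binom{p^i}{j} = i - v_p(j)$. The valuation in question is then $(m-1-i) + (i - v_p(j)) + j = m - 1 + \bigl(j - v_p(j)\bigr)$, so the bound reduces to the elementary inequality $j - v_p(j) \geqslant 1$ for all $j \geqslant 1$, which holds because $p^{v_p(j)} \leqslant j$ forces $v_p(j) \leqslant j - 1$. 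Once this valuation identity is in hand the remaining steps are formal, and the case $i = 0$ recovers the familiar fact that $p^{m-1}$ times any lift is automatically $S$-invariant.
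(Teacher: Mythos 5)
The paper states Lemma \ref{Llift} without proof---it is offered as a recipe suggested by the treatment in \cite{SV}, and the only related computation in the text is the ``observation'' \eqref{Ecalculation} at the start of the proof of Theorem \ref{Twitt}, which is the same binomial-expansion idea in a hands-on form---so there is no in-paper argument to compare yours against. Your proof is correct and self-contained. Writing $\sigma(\widetilde{a_1}) = \widetilde{a_1} + pb$ is legitimate because $\sigma$ induces an element of $\overline{S}$ on $A_1 = A_m/pA_m$ that fixes $a_1$; the binomial expansion then reduces everything to the bound $v_p\bigl(p^{m-1-i}\binom{p^i}{j}p^j\bigr) \geqslant m$ for $1 \leqslant j \leqslant p^i$, and your computation $v_p\binom{p^i}{j} = i - v_p(j)$ (via $j\binom{p^i}{j} = p^i\binom{p^i-1}{j-1}$ and Lucas) together with the elementary inequality $j - v_p(j) \geqslant 1$ is exactly right; the conclusion follows since $p^m A_m = 0$, and the same expansion gives independence of the chosen lift. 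The one caveat is your appeal to commutativity of $A_m$ in order to apply the binomial theorem: this is not literally among the stated hypotheses of the general setup, though it holds in the paper's application $A_m = R_m[T]$, and in general it suffices that $b$ commute with $\widetilde{a_1}$ (e.g.\ when $a_1$ is central, as in the setting of \cite{SV}). With that proviso made explicit, the argument is complete.
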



Lemma \ref{Llift} provides a recipe to produce invariant elements in
$A_m$ using $\overline{S}$-invariants of $A_1$. A more challenging
question in this general setting and in specific examples involves
understanding if this recipe generates \textit{all} $S$-invariants in
$A_m$. Theorem \ref{Twitt} shows that this is indeed the case for the
family $R_m = \W_m(\F_q)$, including the finite rings $\F_q$ and $\Z /
p^m \Z$ that feature in Theorem \ref{Tmain} above.\medskip

We now turn to Theorem \ref{Twitt}. Before showing the result, we present
an alternate formulation in terms of the coefficients $c_n \in R$ for
which $c_n (T^q-T)^n$ is translation-invariant. First define for each
$n>0$ the integer $v_p(n)$ to be the $p$-adic valuation, and also,
\[
n_\downarrow := \lfloor p^{m-1-v_p(n)} \rfloor p^{v_p(n)} =
\begin{cases}
0, \qquad & \text{ if } p^m \mid n,\\
p^{m-1}, & \text{ otherwise}.
\end{cases}
\]

\noindent Now the set of translation-invariant polynomials is (claimed to
be) precisely the space
\begin{equation}
R \oplus \bigoplus_{n>0} \lceil p^{m-1-v_p(n)} \rceil \cdot
\Ann_R(n_\downarrow \cdot \m) \cdot (T^q-T)^n.
\end{equation}

\begin{proof}[Proof of Theorem \ref{Twitt}]
We begin with the following observation:\medskip

\textit{Suppose $R$ is a commutative unital ring with prime ideal
generated by a prime integer $p>0$. Now suppose the following equation
holds in $R$: $f_1 = f_0 + p^j X$, with $j>0$. Fix $n>0$ such that $p^2
\nmid n$. Then there exists $X' \in R$ such that
\begin{equation}\label{Ecalculation}
f_1^n = \begin{cases}
f_0^n + p^j X', & \text{ if } p \nmid n,\\
f_0^n + p^{j+1} X', \qquad & \text{ if } p \mid n, \ p^2 \nmid n.
\end{cases}
\end{equation}
If moreover $p \nmid X$ and $p>2$, then we can choose $p \nmid
X'$.}\medskip

%
%

Now we continue with the proof, breaking it up into steps for ease of
exposition.\medskip

\noindent \textbf{Step 1:}
We first show that the claimed polynomials in \eqref{Ewitt} are indeed
translation-invariant. This is itself shown in a series of sub-steps.
First we consider the case $m=1$, i.e., $R = \W_m(\F_q) = \F_q$. Here the
polynomial $T^q-T$ is translation-invariant (by repeatedly using the
Frobenius map $\frob_q : r \mapsto r^p$), whence the assertion follows
for $R = \F_q$. Now since $(T^q-T)^n$ is translation-invariant over
$\F_q$ for $n \geqslant 0$, Lemma \ref{Llift} shows the polynomials in
\eqref{Ewitt} are translation-invariant over $R_m = \W_m(\F_q)$, where
$\m = 0$ and $\mathfrak{I} = R_m$.

We next show the assertion for $R = \m \oplus \W_m(\F_q)$. Begin by
defining and studying
\begin{equation}
f_{i,n}(T) := (T^q-T)^{n p^i}, \qquad i,n \geqslant 0.
\end{equation}

\noindent Repeatedly using $\frob_q$ shows $(T+r)^q \equiv T^q + r^q \mod
(p)$ in $R[T]$, for any $r \in R$. Thus,
\begin{equation}\label{Efirst}
f_{0,1}(T+r) - f_{0,1}(T) \equiv (T^q + r^q) - (T+r) - (T^q-T) \equiv
r^q-r \mod p T.
\end{equation}

Now to show the assertion, it suffices to show the invariance of
\eqref{Ewitt} under translation by any $r \in \m$. Start with
\eqref{Efirst}, noting by assumption on $\m$ that $r^q = 0$. Thus
$f_{0,1}(T+r) = (T+r)^q - (T+r) \equiv T^q - T - r \mod p$. Applying
\eqref{Ecalculation} for $p \nmid n$,
\[
f_{0,n}(T+r) = f_{0,1}(T+r)^n \equiv (T^q-T-r)^n \mod p,
\]

\noindent whence repeated applications of \eqref{Ecalculation} now yield:
\begin{equation}
f_{i,n}(T+r) \equiv (T^q-T-r)^{n p^i} \mod p^{i+1}.
\end{equation}

There are now two sub-cases. If $i \geqslant m$ and $n>0$, then by the
binomial theorem,
\begin{align*}
f_{m,n} (T+r) = &\ f_{m-1,n}(T+r)^p = ((T^q-T-r)^{n p^{m-1}})^p\\
= &\ (T^q-T)^{n p^m} + \sum_{l=1}^{p-1} \binom{np^m}{l} (T^q-T)^{n p^m-l}
(-r)^l\\
\in &\ f_{m,n}(T) + p^m \cdot R[T] = f_{m,n}(T),
\end{align*}

\noindent where the summation stops at most by $l=p-1$ since $r^p \in
\m^p = 0$. This calculation shows that $(T^q-T)^{n p^m}$ is
translation-invariant. Next, suppose $0 \leqslant i \leqslant m-1$, and
$r' \in \Ann_R(p^{m-1} \m)$. Given $r \in \m$, compute as above:
\begin{align*}
p^{m-1-i} r' f_{i,n} (T+r) = &\ p^{m-1-i} r' (T^q-T-r)^{n p^i}\\
= &\ p^{m-1-i} r' (T^q-T)^{n p^i} + p^{m-1-i} r'
\sum_{l=1}^{p-1} \binom{np^i}{l} (T^q-T)^{n p^i-l} (-r)^l\\
\in &\ p^{m-1-i} r' f_{i,n}(T) + r' p^{m-1-i} \cdot p^i \cdot \m[T].
\end{align*}

\noindent By choice of $r'$, it follows that $r' p^{m-1-i} f_{i,n}(T)$ is
indeed translation-invariant on $R$, as desired.\medskip

\noindent \textbf{Step 2:}
The remaining steps will prove the reverse inclusion, which is more
involved. In this step, we show that for all $m>0$ and prime powers $q$,
every translation-invariant polynomial $f(T)$ on $R = \m \oplus
\W_m(\F_q)$ is a polynomial in $T^q-T$.

To see why, first use the $\W_m(\F_q)$-freeness of $\m$ to write $f(T) =
\sum_j r_j g_j(T)$, where the $r_j$ comprise a $\W_m(\F_q)$-basis of $R$,
and $g_j \in  W_m(\F_q)[T]\ \forall j$. Then each $g_j$ is
translation-invariant on $\W_m(\F_q)$, and it suffices to show that $g_j
\in \W_m(\F_q)[T^q-T]$, for each $j$. We may replace $g_j$ by $g_j(T) -
g_j(0)$, whence $T | g_j(T)$.

Let $\omega_m : \F_q \to R = \W_m(\F_q)$ denote the Teichm\"uller
character, which restricts to an injective group morphism $: \F_q^\times
\to R^\times = R \setminus (p)$. From this it follows that $\prod_{a \in
\F_q^\times} (T - \omega_m(a)) = T^{q-1}-1$.
Moreover, the linear polynomials $\{ T-\omega_m(a) : a \in \F_q \}$ are
pairwise coprime.

Returning to the proof, since $T | g_j(T)$, it follows that $T -
\omega_m(a)$ also divides $g_j(T)$ for $a \in \F_q^\times$, whence $T^q-T
= \prod_{a \in \F_q} (T - \omega_m(a))$ divides $g_j(T)$. From this it
follows by induction on $\deg(g_j)$ that each $g_j$, whence $f$, is a
polynomial in $T^q-T$.\medskip

\noindent \textbf{Step 3:}
By the two previous steps, any $f \in R[T]$ which is
translation-invariant on $R$, is a polynomial in $T^q-T$; moreover, we
may subtract all terms of the form $c (T^q-T)^{np^m}$ as all such terms
are translation-invariant. Thus, assume henceforth that
\begin{equation}\label{Edirectsum}
f(T) \in \bigoplus_{i=0}^{m-1} \bigoplus_{p \nmid n} R \cdot
(T^q-T)^{np^i}.
\end{equation}

In this step we complete the proof for $m=1$ and all $p,q$. Notice that
it suffices to show translation-invariance by $\m$. Let $f(T) =
\sum_{j=0}^n r_j T^j$, with $n = \deg(f)$. As $p \nmid n$, use the
Frobenius repeatedly to compute for $r \in \m$:
\begin{align*}
f(T+r) - f(T) = &\ \sum_{j=0}^n r_j [ ((T+r)^q - (T+r)^j - (T^q-T)^j]\\
= &\ \sum_{j=0}^n r_j [(T^q-T-r)^j - (T^q-T)^j],
\end{align*}

\noindent so the highest degree term in $T$ comes from the leading term:
\[
- r_n \cdot n \cdot (T^q-T)^{n-1} \cdot r, \qquad r \in \m.
\]

\noindent This expression must vanish; as $n$ is invertible,
$r_n \in \Ann_R(\m)$.
Subtracting the translation-invariant polynomial $r_n (T^q-T)^n$ from
$f(T)$, we are done by induction on $\deg(f) = n$.\medskip

\noindent \textbf{Step 4:}
We next \textbf{claim} that if a translation-invariant polynomial $f$ is
as in \eqref{Edirectsum}, then $f(T)$ in fact lies in the second direct
sum in \eqref{Ewitt}. We will show the claim first for $R = \W_m(\F_p) =
\Z / p^m \Z$, then for $R = \W_m(\F_q)$, and finally (in the next step)
for $R = \m \oplus \W_m(\F_q)$, to conclude the proof of the theorem.

Begin by assuming $R = \Z / p^m \Z = \W_m(\F_p)$. Suppose $f(T) = a T^n +
\cdots$, where $n = \deg(f)$ is positive and divisible by $q=p$; and $p^m
\nmid a$. Since $f(T+1) \equiv f(T)$, it follows that $a n = 0 \mod p^m$.
Suppose $n = n' p^{i+1}$ and $a = x p^{m-1-i}$ for some $0 \leqslant i <
m$ and $n'>0$, and $0 \neq x \in \Z/p^m\Z$. Then $f(T) - x p^{m-1-i}
f_{i,n'}(T)$ is a translation-invariant polynomial of strictly smaller
degree. Therefore the claim for $\W_m(\F_p)$ follows by induction on
$\deg(f)$.

Next, suppose $R = \W_m(\F_q)$, and $f$ as in \eqref{Edirectsum} is
translation-invariant on $R$. Use the $\Z/p^m\Z$-freeness of $\W_m(\F_q)$
to write $f(T) = \sum_j r_j g_j(T)$, where the $r_j$ comprise a
$(\Z/p^m\Z)$-basis of $R$, and $g_j \in (\Z/p^m\Z)[T]\ \forall j$. Then
$g_j$ is translation-invariant on $\Z/p^m\Z$ for all $j$. Now use the
above analysis in this step for $R = \Z / p^m \Z$ to write:
\[
f(T) = \sum_{i=0}^{m-2} \sum_{j=1}^{l_i} c_{ij} p^{m-1-i} (T^p-T)^{n_{ij}
p^i} + \sum_{j=1}^{l_{m-1}} c_{m-1,j} (T^p-T)^{n_{m-1,j} p^{m-1}},
\]

\noindent where $p \nmid n_{ij}$ if $i<m-1$. We now show that $f$ is of
the desired form \eqref{Edirectsum} by induction on $m$. The base case of
$m=1$ was proved in Step 2 above; moreover if $p$ divides $c_{m-1,j}$ for
all $j$ then $p^{-1} f(T)$ makes sense and is translation-invariant in
$\W_{m-1}(\F_q)$. But then the result follows by the induction
hypothesis.

Thus, assume without loss of generality that $m>1$ and $p \nmid f(T)$.
Now consider $f(T) \mod p$, which is a polynomial over $\F_q =
\W_1(\F_q)$ that is translation-invariant. Note that
\begin{align*}
f(T) \mod p \equiv &\ \sum_{j=1}^{l_{m-1}} \overline{c}_{m-1,j}
(T^p-T)^{n_{m-1,j} p^{m-1}}\\
\equiv &\ \frob_q^{m-1}(f_1(T)), \qquad
f_1(T) := \sum_{j=1}^{l_{m-1}} \frob_q^{-(m-1)}(\overline{c}_{m-1,j})
(T^p-T)^{n_{m-1,j}}.
\end{align*}

\noindent It follows that $f_1(T)$ is translation-invariant on $\F_q$.
Now since $m=1$ for $f_1$, it follows by Step 2 that $0 \neq f_1(T) \in
\F_q[T^q-T]$, say $\displaystyle f_1(T) = \sum_j a_j (T^q-T)^j$. Then,
\[
f(T) \mod p = \frob_q^{m-1}(f_1(T)) = \sum_j a_j^{p^{m-1}} (T^q-T)^{j
p^{m-1}}.
\]

\noindent Fixing lifts $b_j \in \W_m(\F_q)$ of $a_j^{p^{m-1}} \in \F_q$
for all $j$, define
\[
h(T) := \sum_j b_j (T^q-T)^{j p^{m-1}} \in \W_m(\F_q)[T].
\]

\noindent By Step 1, $h(T)$ is translation-invariant on $\W_m(\F_q)[T]$,
and $f - h$ is a translation-invariant polynomial divisible by $p$. As
mentioned above, now divide by $p$ and work in $\W_{m-1}(\F_q)[T]$ by
induction on $m$, to conclude the proof in the case $R =
\W_m(\F_q)$.\medskip

\noindent \textbf{Step 5:}
The final step is to show the result for $R = \m \oplus \W_m(\F_q)$, with
$f(T)$ as in \eqref{Edirectsum} being translation-invariant over $R$.
Use a $\W_m(\F_q)$-basis of $R$, say $\{ r_j \}$, to write $f(T) = \sum_j
r_j g_j(T)$; then each $g_j$ is translation-invariant over $\W_m(\F_q)$.
Thus by Step 4,
\[
f(T) = \sum_{n>0} c_{nm} (T^q-T)^{n p^m} + \sum_{i=0}^{m-1}
\sum_{p \nmid n} c_{ni} p^{m-1-i} (T^q-T)^{n p^i} + c_{0m},
\]

\noindent with all $c_{ni} \in R$. As above, assume that $c_{nm} = 0\
\forall n>0$, using Step 1. We show that $c_{ni} \in \Ann_R(p^{m-1} \m)$
by induction on $\deg f$. The base case of $f(T) = c_{0m}$ is obvious.
Now given $f(T) = g(T^q-T)$ as above, notice by the uniqueness of the
representation of $f$ in the preceding equation, the leading degree term
of $f$ comes from a unique summand, say corresponding to $(n,i)$ with
$i<m$. Expanding in powers of $T^q-T$ via computations as above, the
identically zero polynomial
\[
f(T+r) - f(T) = g((T+r)^q - (T+r)) - g(T^q-T) \equiv g(T^q-T-r) -
g(T^q-T) \mod p
\]

\noindent has leading term
\[
p^{m-1-i} c_{ni} \binom{n p^i}{1} (T^q-T)^{n p^i - 1} (-r) =
p^{m-1} \cdot (-n) r c_{ni} (T^q-T)^{n p^i - 1} \mod p^m.
\]

\noindent This term must vanish for all $r \in \m$. As $n \in
(\Z/p^m\Z)^\times$, this shows $c_{ni} \in \Ann_R(p^{m-1} \m)$.
Subtracting $c_{ni} p^{m-1-i}(T^q-T)^{np^i}$ from $f(T)$, the result
holds by induction on $\deg(f)$.
\end{proof}

We conclude our discussion of translation-invariant polynomials by
observing that Theorem \ref{Twitt} helps classify all such polynomials
over a large class of finite commutative rings.
Indeed, given a finite commutative ring $R$, write $R = \times_j R_j$ as
a product of local rings $R_j$, each of which has prime power
characteristic $p_j^{m_j}$, as discussed prior to Theorem \ref{Tmain}.
Now if each $R_j$ is of the form in Theorem \ref{Twitt}, then the
translation-invariant polynomials in $R[T]$ can be classified using the
following result.

\begin{lemma}
Given a commutative ring $R = \times_j R_j$, write $f \in R[T]$ as
$\sum_j f_j$, with $f_j \in R_j[T]$. Then $f$ is translation-invariant
over $R$ if and only if each $f_j$ is translation-invariant over $R_j$.
\end{lemma}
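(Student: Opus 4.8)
The plan is to reduce the statement to the canonical identification of rings $R[T] \cong \times_j R_j[T]$, under which the given decomposition $f = \sum_j f_j$ is exactly the expression of $f$ through its components $f_j \in R_j[T]$, and under which translation acts one factor at a time. Once this is set up, the equivalence follows by unwinding definitions.

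First I would record the structural fact that, since $R = \times_j R_j$, every element $r \in R$ is a tuple $r = (r_j)_j$ with $r_j \in R_j$, and the projections $\pi_j \colon R[T] \to R_j[T]$ are ring homomorphisms fixing $T$ and sending $r \mapsto r_j$. The component of $f$ in $R_j[T]$ is $\pi_j(f) = f_j$. The key point is that substitution is compatible with these projections: for $r = (r_j)_j \in R$ one has $\pi_j\big(f(T+r)\big) = f_j(T + r_j)$, since applying a $T$-fixing ring homomorphism to $f(T+r)$ amounts to substituting $T + \pi_j(r) = T + r_j$ into $\pi_j(f) = f_j$.

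With this in hand the reverse implication is immediate: if each $f_j$ is translation-invariant over $R_j$, then for every $r = (r_j)_j \in R$ and every $j$ we get $\pi_j\big(f(T+r) - f(T)\big) = f_j(T+r_j) - f_j(T) = 0$, and since an element of $R[T]$ vanishes precisely when all of its components do, $f(T+r) = f(T)$. Thus $f$ is translation-invariant over $R$.

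For the forward implication I would test invariance on translations supported in a single factor. Assuming $f$ is translation-invariant over $R$, fix an index $j_0$ and an arbitrary $s \in R_{j_0}$, and let $r \in R$ be the tuple with $r_{j_0} = s$ and $r_j = 0$ for $j \neq j_0$. Then $f(T+r) = f(T)$, and projecting to the $j_0$th factor gives $f_{j_0}(T+s) = f_{j_0}(T)$. As $j_0$ and $s$ were arbitrary, each $f_j$ is translation-invariant over $R_j$. I do not anticipate a genuine obstacle here; the only point requiring care is the componentwise description of substitution, which rests on the projections $\pi_j$ being $T$-preserving ring homomorphisms, and the rest is the standard device of checking invariance factor by factor.
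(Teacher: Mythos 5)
Your proposal is correct and is essentially the paper's argument in different clothing: the paper works with the internal decomposition, noting $f_i(T+r_j)-f_i(T)\in r_j R_i[T]=0$ for $i\neq j$ so that $f(T+\sum_j r_j)-f(T)=\sum_j\bigl(f_j(T+r_j)-f_j(T)\bigr)$, which is exactly your statement that the projections $\pi_j$ commute with substitution and that an element of $R[T]$ vanishes iff all its components do. Both directions then follow identically in the two write-ups.
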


\begin{proof}
Writing $f = \sum_j f_j$ uniquely as above, it follows for $i \neq j$ and
$r_j \in R_j$ that
\[
f_i(T+r_j) - f_i(T) \in r_j R_i[T] = 0.
\]

\noindent This implies the result, since
\[
f \left( T+\sum_j r_j \right) - f(T) = \sum_j \left( f_j(T+r_j) - f_j(T)
\right). \qedhere
\]
\end{proof}





\begin{thebibliography}{10}
\bibitem{BCL}
J.V.~Brawley, L.~Carlitz, and J.~Levine, {\em Power sums of matrices over
  a finite field},
  \href{http://dx.doi.org/10.1215/S0012-7094-74-04102-7}{Duke
  Mathematical Journal} 41(1):9--24, 1974.

\bibitem{Ca1}
L.~Carlitz, {\em Some sums involving polynomials in a Galois field},
  \href{http://dx.doi.org/10.1215/S0012-7094-39-00574-0}{Duke
  Mathematical Journal} 5(4):941--947, 1939.

\bibitem{Ca2}
L.~Carlitz, {\em An analogue of the Staudt--Clausen theorem},
  \href{http://dx.doi.org/10.1215/S0012-7094-40-00703-7}{Duke
  Mathematical Journal} 7(1):62--67, 1940.

\bibitem{Ca3}
L.~Carlitz, {\em The Staudt--Clausen theorem},
  \href{http://dx.doi.org/10.2307/2688488}{Mathematics Magazine}
  34:131--146, 1961.

\bibitem{Cl}
T.~Clausen, {\em Lehrsatz aus einer Abhandlung \"uber die Bernoullischen
  Zahlen},
  \href{http://dx.doi.org/10.1002/asna.18400172204}{Astronomische
  Nachrichten} 17(22):351--352, 1840.

\bibitem{FGO}
P.~Fortuny Ayuso, J.M.~Grau, and A.M.~Oller-Marc\'en, {\em A von
  Staudt-type result for $\displaystyle \sum_{z \in \Z_n[i]} z^k$},
  \href{http://dx.doi.org/10.1007/s00605-015-0736-5}{Monatshefte f\"ur
  Mathematik} 178(3):345--359, 2015.

\bibitem{FGOR}
P.~Fortuny Ayuso, J.M.~Grau, A.M.~Oller-Marc\'en, and I.F.~R\'ua, {\em On
  power sums of matrices over a finite commutative ring}, preprint,
  \href{http://dx.doi.org/10.1142/S0218196717500278}{International
  Journal of Algebra and Computation} 27(5):547--560, 2017.

\bibitem{Ge}
E.-U.~Gekeler, {\em On power sums of polynomials over finite fields},
  \href{http://dx.doi.org/10.1016/0022-314X(88)90023-6}{Journal of Number
  Theory} 30(1):11--26, 1988.

\bibitem{Gos1}
D.~Goss, {\em Von Staudt for ${\bf F}_q[T]$},
  \href{http://dx.doi.org/10.1215/S0012-7094-78-04541-6}{Duke
  Mathematical Journal} 45(4):885--910, 1978.

\bibitem{Gos2}
D.~Goss, {\em Kummer and Herbrand criterion in the theory of function
  fields},
  \href{http://dx.doi.org/10.1215/S0012-7094-82-04923-7}{Duke
  Mathematical Journal} 49(2):377--384, 1982.

\bibitem{Gou}
H.W.~Gould, {\em The Girard--Waring Power Sum Formulas for Symmetric
  Functions, and Fibonacci Sequences},
  \href{http://www.fq.math.ca/37-2.html}{The Fibonacci Quarterly}
  37(2):135--140, 1999.

\bibitem{GO}
J.M.~Grau and A.M.~Oller-Marc\'en, {\em Power sums over finite
  commutative unital rings}, 
  \href{http://dx.doi.org/10.1016/j.ffa.2017.07.003}{Finite Fields and
  their Applications} 48:10--19, 2017.

\bibitem{GOS}
J.M.~Grau, A.M.~Oller-Marc\'en, and J.~Sondow, {\em On the congruence
  $1^m + 2^m + \cdots + m^m \equiv n (\hspace*{-2mm} \mod m)$ with $n |
  m$}, \href{http://dx.doi.org/10.1007/s00605-014-0660-0}{Monatshefte
  f\"ur Mathematik} 177(3):421--436, 2015.

\bibitem{Lu}
E.~Lucas, {\em Th\'eorie des Fonctions Num\'eriques Simplement
  P\'eriodiques} (parts 1,2,3), American Journal of Mathematics 1:
  \href{https://dx.doi.org/10.2307/2369308}{184--196}; 
  \href{https://dx.doi.org/10.2307/2369311}{197--240}; 
  \href{https://dx.doi.org/10.2307/2369373}{289--321}, 1878.

\bibitem{Mor}
P.~Moree, {\em On a theorem of Carlitz--von Staudt},
  C.~R.~Math.~Rep.~Acad.~Sci.~Canada 16(4):166--170, 1994.

\bibitem{St}
K.G.C.~von Staudt, {\em Beweis eines Lehrsatzes, die Bernoullischen
  Zahlen betreffen},
  \href{http://dx.doi.org/10.1515/crll.1840.21.372}{Journal f\"ur die
  reine und angewandte Mathematik} 21:372--374, 1840.

\bibitem{SV}
A.~Stewart and V.~Vologodsky, {\em On the center of the ring of
  differential operators on a smooth variety over $\Z/p^n\Z$},
  \href{http://dx.doi.org/10.1112/S0010437X12000462}{Compositio
  Mathematica} 149(1):63--80, 2013.

\bibitem{Th}
D.S.~Thakur, {\em Power sums of polynomials over finite fields and
  applications: A survey},
  \href{http://dx.doi.org/10.1016/j.ffa.2014.08.004}{Finite Fields and
  Their Applications} 32:171--191, 2015.

\bibitem{Wi}
E.~Witt, {\em Zyklische K\"orper und Algebren der Charakteristik $p$ vom
  Grad $p^n$. Struktur diskret bewerteter perfekter K\"orper mit
  vollkommenem Restklassenk\"orper der Charakteristik $p$},
  \href{http://dx.doi.org/10.1515/crll.1937.176.126}{Journal f\"ur die
  reine und angewandte Mathematik} 176:126--140, 1937.
\end{thebibliography}
\end{document}